\documentclass[12pt,amstex]{amsart}%[11pt,reqno]{amsart}
\usepackage[T1]{fontenc}
\usepackage[latin9]{inputenc}
\usepackage{multirow}
\usepackage{array}
\usepackage{amsmath, amssymb}
\usepackage{amsthm} %theorem environment option
 \usepackage{bm}

%%%%%%%%%%%%%
\usepackage{a4wide}
\usepackage[ngerman, english]{babel}
\usepackage{newtxmath,newtxtext}
\usepackage{setspace}
\usepackage{amsmath,mathtools}
\usepackage{amsfonts}
%%%%%%%%%%%%%

\setlength{\textwidth}{15cm}
\setlength{\oddsidemargin}{0.5cm}
\setlength{\evensidemargin}{0.5cm}
\setlength{\textheight}{21cm}
\setlength{\headsep}{1.5cm}

%\renewcommand{\baselinestretch}{1.2} %changing the interline spacing
%\renewcommand{\thefootnote}{} %footnote counter
%%%%%%%%% Theorem-like environment %%%%%%%%%%%
%
\theoremstyle{plain} %text of this environment is typesetted in italics
\newtheorem{theorem}{Theorem}[section]
\newtheorem{lemma}[theorem]{Lemma}
\newtheorem{corollary}[theorem]{Corollary}
\newtheorem{proposition}[theorem]{Proposition}

\newtheorem{theor}{Theorem}

\theoremstyle{definition}

\newtheorem{remark}[theorem]{Remark}
\newtheorem{example}[theorem]{Example}

 \usepackage{caption}

\usepackage{color}
\usepackage{graphicx,color}
\usepackage{amsmath, amssymb, graphics}

\graphicspath{{./Figures/}}

%%%%%%%%%%%%%%%%%%%%%%%%%%%%%% Textclass specific LaTeX commands.

%%%%%%%%%%%%%%%%%%%%%%%%%%%%%% User specified LaTeX commands.
\DeclareMathOperator{\tr}{tr}
\DeclareMathOperator{\arcsinh}{arcsinh}
\DeclareMathOperator{\arccosh}{arccosh}

%%%%%%%%%%%%%%%%%%%%%%%%%%%%%% User specified LaTeX commands.
\begin{document}
%\begin{frontmatter}
\title[Wick rotations of solutions to three equations]{Wick rotations of solutions to the minimal surface equation, the zero mean curvature equation and the Born-Infeld equation}
  \author[S. Akamine R. Singh]{Shintaro Akamine and Rahul Kumar Singh}
   \address{Shintaro Akamine: Graduate School of Mathematics Kyushu University\\
744 Motooka, Nishi-ku,\
Fukuoka 819-0395, Japan}
 \email{s-akamine@math.kyushu-u.ac.jp}
 
  \address{Rahul Kumar Singh: School of Mathematical Sciences, National Institute of Science Education and Research, HBNI, Bhubaneshwar, Khurda, Odisha--752050, India}
 \email{rahul412@niser.ac.in}

%\date{\today}

\keywords{minimal surface, zero mean curvature surface, solution to the Born-Infeld equation, Wick rotation.}
\subjclass[2010]{Primary 53A10; Secondary 58J72, 53B30.}

\begin{abstract}In this paper we investigate relations between solutions to the minimal surface equation in Euclidean $3$-space $\mathbb{E}^3$, the zero mean curvature equation in Lorentz-Minkowski $3$-space $\mathbb{L}^3$ and the Born-Infeld equation under Wick rotations. We prove that the existence conditions of real solutions and imaginary solutions after Wick rotations are written by symmetries of solutions, and reveal how real and imaginary solutions are transformed under Wick rotations. We also give a transformation theory for zero mean curvature surfaces containing lightlike lines with some symmetries.  As an application, we give new correspondences among some solutions to the above equations by using the non-commutativity between Wick rotations and isometries in the ambient space.\end{abstract}

\maketitle

\section{Introduction}\label{Sec.1}
In this paper we study geometric relations of real analytic solutions to the following three equations
\begin{align}
(1+f_y^2)f_{xx}-2f_xf_yf_{xy}+(1+f_x^2)f_{yy}=0,\label{eq:M} \\
(1-g_y^2)g_{xx}+2g_xg_yg_{xy}+(1-g_x^2)g_{yy}=0,\label{eq:ZMC}\\
(1-h_t^2)h_{xx}+2h_th_xh_{tx}-(1+h_x^2)h_{tt}=0.\label{eq:BI}
\end{align}

The equation (\ref{eq:M}) is called the {\it minimal surface equation} which is the equation of minimal graphs over a domain of the $xy$-plane in Euclidean 3-space $\mathbb{E}^3=\mathbb{E}^3(x,y,z)$. The second equation (\ref{eq:ZMC}) is called the {\it zero mean curvature equation}, which is the equation of graphs with zero mean curvature over a domain of the spacelike $xy$-plane in Lorentz-Minkowski 3-space $\mathbb{L}^3=\mathbb{L}^3(t,x,y)$. The graph of $g$ is spacelike if $1-g_x^2-g_y^2>0$, timelike if $1-g_x^2-g_y^2<0$ and lightlike if $1-g_x^2-g_y^2=0$. The third equation (\ref{eq:BI}) is called the {\it Born-Infeld equation}, which is the equation of graphs with zero mean curvature over a domain of the timelike $tx$-plane in $\mathbb{L}^3$. The equation (\ref{eq:BI}) also appears in a geometric nonlinear theory of electromagnetism, which is known as Born-Infeld model introduced by Born and Infeld \cite{BI}. A surface in $\mathbb{L}^3$ whose mean curvature vanishes identically is called a {\it zero mean curvature surface}, and such surface can be written as the graph of a solution to (\ref{eq:ZMC}) or (\ref{eq:BI}) after a rigid motion in $\mathbb{L}^3$. A spacelike (resp.\ timelike) zero mean curvature surface is called a {\it maximal surface} (resp.\ {\it timelike minimal surface}).

It is known that there is a duality between solutions to (\ref{eq:M}) and spacelike solutions to (\ref{eq:ZMC}) called the {\it Calabi's correspondence} \cite{Calabi} as follows. 
Let $z=f(x,y)$ be a minimal graph over a simply-connected domain. Since the equation (\ref{eq:M}) for $f$ is equivalent to
\[
\frac{\partial}{\partial x}\left(\frac{f_x}{\sqrt{1+f_x^2+f_y^2}}\right)+\frac{\partial}{\partial y}\left(\frac{f_y}{\sqrt{1+f_x^2+f_y^2}}\right)=0,
\]
we can find a spacelike solution to (\ref{eq:ZMC}) on the same domain such that 
\[
(g_x,g_y)=\frac{(-f_y,f_x)}{\sqrt{1+f_x^2+f_y^2}},\quad 1-g_x^2-g_y^2>0.
\]
We can recover a solution to (\ref{eq:M}) from a spacelike solution to (\ref{eq:ZMC}) by the same procedure. On the other hand, there are some solutions to (\ref{eq:M}) and (\ref{eq:ZMC}) which do not correspond by the Calabi's correspondence but resemble each other as pointed out in \cite{FujimoriETAL1,Kobayashi}. For example, the solution to (\ref{eq:M}) 
\begin{equation}\label{eq:Scherk}
z=f(x,y)=\log{\left(\frac{\cos{x}}{\cos{y}}\right)}
\end{equation}
is the graph of the classical Scherk minimal surface in $\mathbb{E}^3$. On the other hand, 
\begin{equation}\label{eq:ZMC_Scherk}
t=g(x,y)=\log{\left(\frac{\cosh{x}}{\cosh{y}}\right)}
\end{equation}
is a solution to (\ref{eq:ZMC}) in $\mathbb{L}^3(t,x,y)$ which is an entire graph found by Kobayashi \cite{Kobayashi} having all causal characters (Figure 1, center). This surface can be obtained by replacing $x$ and $y$ by $ix$ and $iy$ in (\ref{eq:Scherk}), and by identifying $z$ with $t$. Moreover, if we replace only $y$ by $iy$ in (\ref{eq:Scherk}), and by identifying $(x,y,z)$ with $(\tilde{x},\tilde{t},\tilde{y})$, we have the following solution to (\ref{eq:BI}) in $\mathbb{L}^3(\tilde{t},\tilde{x},\tilde{y})$
\[
\tilde{y}=h(\tilde{t},\tilde{x})=\log{\left(\frac{\cos{\tilde{x}}}{\cosh{\tilde{t}}}\right)}.
\]

%%%%%%%%%%%%%%%%%
\begin{figure}[!h]
\begin{center}
\begin{tabular}{c}
%1
\hspace{+0.1cm}
\begin{minipage}{0.3\hsize}
\begin{center}
\vspace{-0.3cm}
\includegraphics[clip,scale=0.23,bb=0 0 350 380]{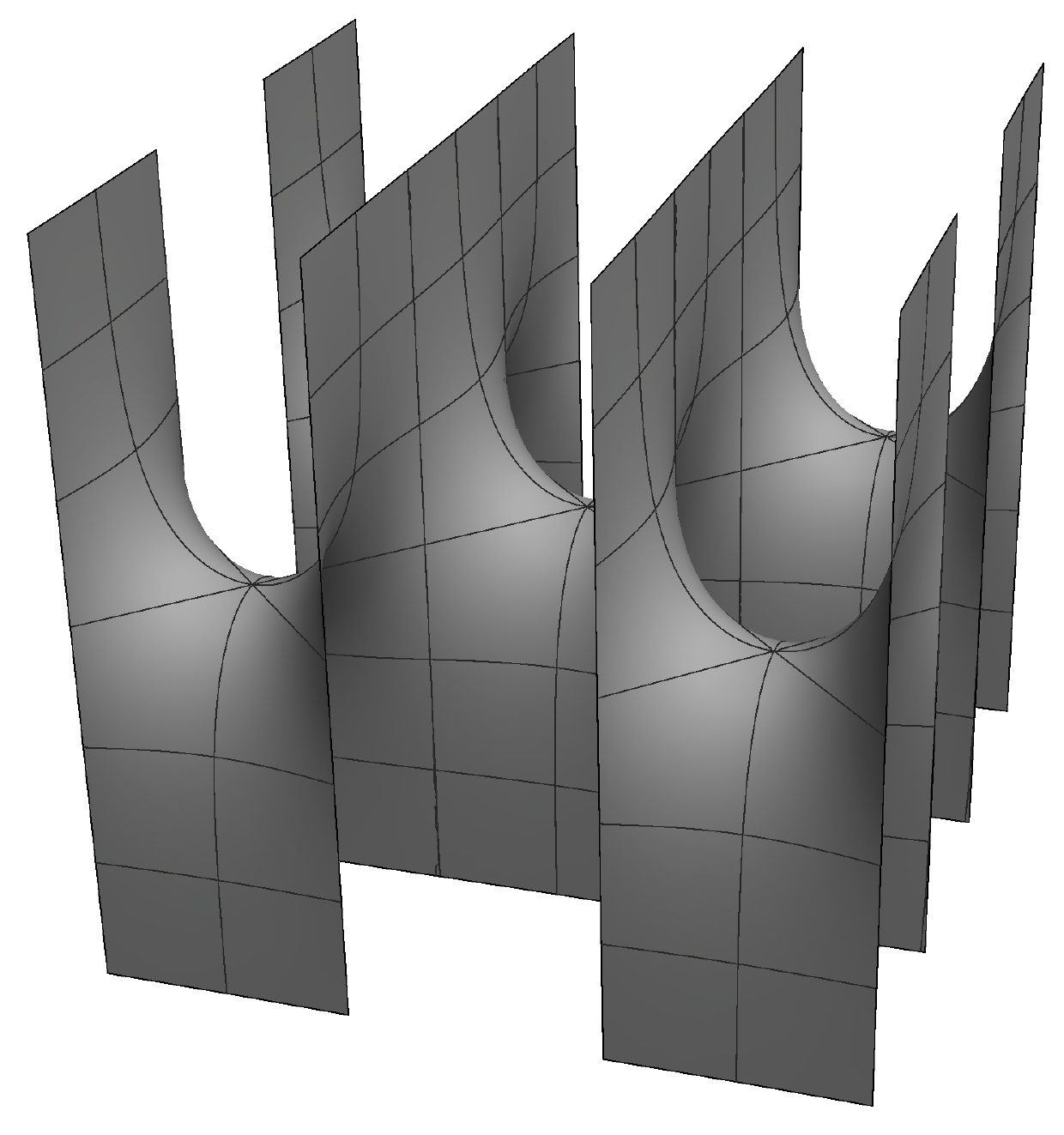}
\vspace{0.5cm}
\end{center}
\end{minipage}
%2
\hspace{0.0cm}
\begin{minipage}{0.3\hsize}
\begin{center}
\vspace{-1.3cm}
\includegraphics[clip,scale=0.28,bb=0 0 350 380]{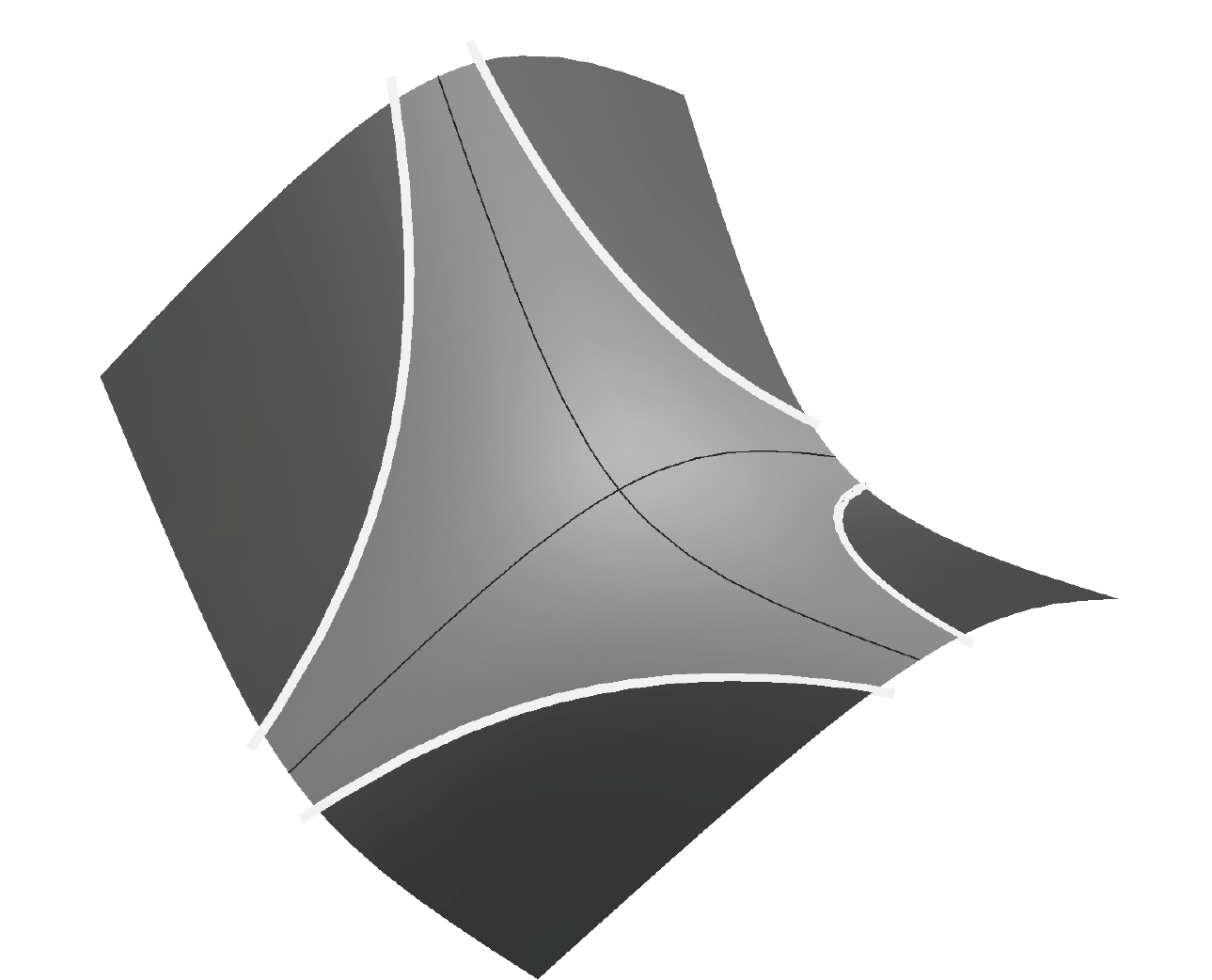}
\vspace{0.3cm}
\end{center}
\end{minipage}

\hspace{-0.1cm}
\begin{minipage}{0.3\hsize}
\begin{center}
\vspace{-0.3cm}
\includegraphics[clip,scale=0.23,bb=0 0 350 380]{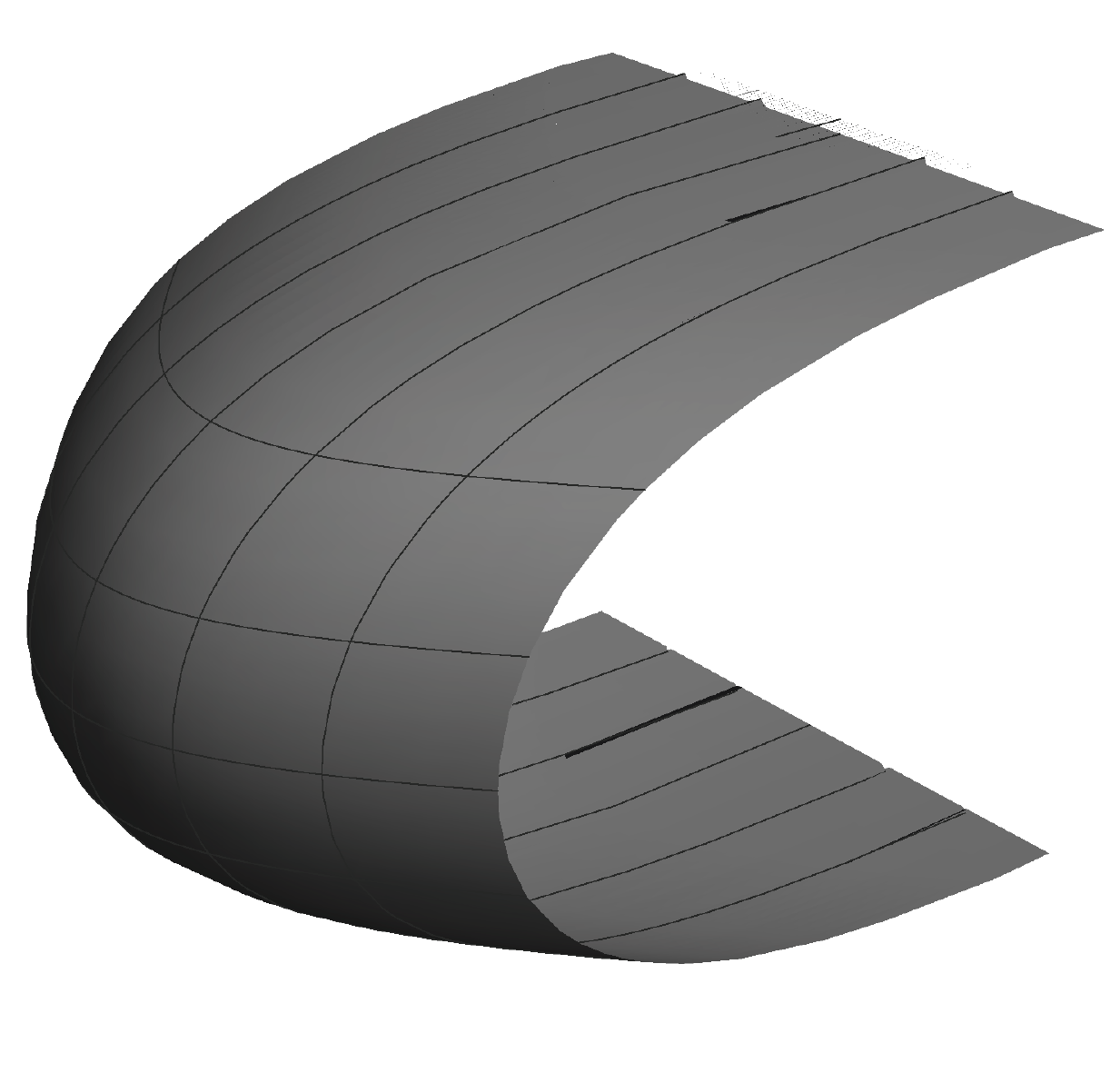}
\vspace{0.5cm}
\end{center}
\end{minipage}

\end{tabular}
\end{center}
\vspace{-0.3cm}
\caption{From left to right: The doubly periodic Scherk minimal surface, Scherk type zero mean curvature surface in \cite{Kobayashi} and the corresponding solution to (\ref{eq:BI}).}\label{Fig1}

\end{figure}
%%%%%%%%%%%%%%%%%%%%%%

 In general, solutions to the equations (\ref{eq:M}), (\ref{eq:ZMC}) and (\ref{eq:BI}) are related  by changing parameters called {\it Wick rotations}. In $ 1954 $, the physicist Wick \cite{GCW} argued that one is allowed to consider the wave function for imaginary values of $ t $, i.e.,  replacing the real time variable $ t $  by imaginary time variable $ it $. This method of changing a real parameter to an imaginary parameter  is what is known as {\it Wick rotation}. It also motivates the observation that the Minkowski metric $ -dt^2+dx^2+dy^2 $ and the Euclidean metric $ dt^2+dx^2+dy^2 $ are equivalent if the time component of either are allowed to have imaginary values. In general, one can use the concept of Wick rotation as a method of finding  solutions to a problem in Minkowski space from solutions to a related problem in Euclidean space.  In past several authors have used this technique of Wick rotation in many different contexts \cite{DS,GI,Kamien,KKSY,MVV}. In our setting, as pointed out in \cite[Section $5$]{MVV}, the problem discussed so far to generate new solutions by using Wick rotations is that solutions may be complex valued, in general. In this paper, we give criteria for existence of real and imaginary solutions after Wick rotations, and study geometric properties of these correspondences as follows (Theorem \ref{thm:BI and M}).
\begin{theor}\label{maintheorem1}
Let $z=f(x,y)$ be a solution to (\ref{eq:M}) without umbilic points on $y=0$. The following statements hold.
\begin{itemize}
\item[(i)] If $f$ is even with respect to the $y$-axis, then the graph $\tilde{y}=f(\tilde{x},i\tilde{t})$ in $\mathbb{L}^3(\tilde{t},\tilde{x},\tilde{y})$ is a timelike solution to (\ref{eq:BI}) with negative Gaussian curvature near $\tilde{t}=y=0$, where $\tilde{t}=y$, $\tilde{x}=x$ and $\tilde{y}=z$.
\item[(ii)]If $f$ is odd with respect to the $y$-axis, then the graph $\tilde{y}=-if(\tilde{t},i\tilde{x})$ in $\mathbb{L}^3(\tilde{t},\tilde{x},\tilde{y})$ is a timelike solution to (\ref{eq:BI}) with positive Gaussian curvature near $\tilde{x}=y=0$, where $\tilde{t}=x$, $\tilde{x}=y$ and $\tilde{y}=z$.
\end{itemize}
\end{theor}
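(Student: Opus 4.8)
\emph{Proof plan.} The plan is to separate an algebraic ``Wick step'' from a geometric computation performed along the locus $\{y=0\}$, where both parity hypotheses enter. For the algebraic step, recall that a solution of $(\ref{eq:M})$ is automatically real analytic, so $f$ extends holomorphically to a complex neighbourhood of the real segment $\{y=0\}$ of its domain. In case (i) I would set $h(\tilde t,\tilde x):=f(\tilde x,i\tilde t)$ and in case (ii) $h(\tilde t,\tilde x):=-i\,f(\tilde t,i\tilde x)$, so that $h$ is holomorphic near $\{\tilde t=0\}$, resp.\ near $\{\tilde x=0\}$; the identifications $\tilde t=y,\ \tilde x=x$ and $\tilde t=x,\ \tilde x=y$ are just the reading of these substitutions. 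Differentiating and substituting, the left-hand side of $(\ref{eq:BI})$ applied to $h$ reduces to the left-hand side of $(\ref{eq:M})$ applied to $f$ at the shifted point in case (i), and to $i$ times it in case (ii); either way it vanishes, so $h$ is a complex solution of $(\ref{eq:BI})$. To see that $h$ is real-valued I would use that the Taylor expansion of $f$ in its second variable about a point of $\{y=0\}$ has real coefficients: evenness leaves only even powers, making $f(\tilde x,i\tilde t)$ real; oddness leaves only odd powers, making $-i\,f(\tilde t,i\tilde x)$ real. (This reality criterion is presumably isolated as a lemma earlier in the paper, which I would cite.) The same parity makes $h$ even in $\tilde t$ in case (i) and odd in $\tilde x$ in case (ii); hence $h_{\tilde t}$ vanishes identically on $\{\tilde t=0\}$, resp.\ on $\{\tilde x=0\}$, and in case (ii) so do $h_{\tilde t\tilde t}$ and $h_{\tilde x\tilde x}$.

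For the geometric step, look at the graph $\tilde y=h(\tilde t,\tilde x)$ in $\mathbb L^3$. Its first fundamental form has $EG-F^2=h_{\tilde t}^2-h_{\tilde x}^2-1$, which along $\{y=0\}$ equals $-(1+h_{\tilde x}^2)<0$ since $h_{\tilde t}=0$ there; thus the graph is a regular timelike surface on a neighbourhood of $\{y=0\}$, with spacelike unit normal, and the Gauss equation reads $K=\det(\mathrm{II})/\det(\mathrm{I})$ (normal sign $\epsilon_\nu=+1$). Along $\{y=0\}$ the parity makes the mixed coefficient $F$ of $\mathrm{I}$ vanish; in case (i) it also kills the mixed coefficient of $\mathrm{II}$, so $\det(\mathrm{II})$ is the product of the two diagonal ones, while in case (ii) it kills those two diagonal coefficients, so $\det(\mathrm{II})$ is minus the square of the mixed one. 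Feeding in, for case (i), the minimal surface equation restricted to $y=0$, namely $f_{xx}=-(1+f_x^2)f_{yy}$ there, one finds $K=-f_{yy}^2/(1+f_x^2)\le 0$ on $\{y=0\}$ in case (i) and $K=f_{xy}^2/(1+f_y^2)^2\ge 0$ on $\{y=0\}$ in case (ii); equivalently, $K$ equals $K_{\mathbb E}$ on $\{y=0\}$ in case (i) and $-K_{\mathbb E}$ in case (ii), where $K_{\mathbb E}\le 0$ is the Gaussian curvature of the original minimal graph in $\mathbb E^3$. Since for a minimal surface an umbilic point is exactly a flat point (because $\kappa_1=-\kappa_2$), the no-umbilic hypothesis on $y=0$ is the statement $K_{\mathbb E}\ne 0$ on $\{y=0\}$, i.e.\ $f_{yy}\ne 0$ there in case (i) and $f_{xy}\ne 0$ there in case (ii). Hence $K<0$ (resp.\ $K>0$) on $\{y=0\}$, and by continuity of $K$ on the domain of $h$, on a neighbourhood of $\{y=0\}$, which is the assertion.

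The differentiation is routine bookkeeping with the powers of $i$; the two delicate points are the following. First, upgrading ``$h$ formally solves $(\ref{eq:BI})$'' to ``$h$ is an honest real-valued, real analytic solution on a full neighbourhood of $\{y=0\}$'' is precisely where the evenness/oddness is used, via the reality of the Taylor coefficients together with holomorphy. Second, pinning down the sign of $K$ requires the Lorentzian Gauss equation with the correct normal sign, the inequality $K_{\mathbb E}\le 0$ for Euclidean minimal graphs (conveniently read off from $(\ref{eq:M})$ restricted to $\{y=0\}$), and the coincidence of umbilic and flat points, all with the factors of $i$ tracked so the curvature formulas come out manifestly real. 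Conceptually, the induced metric of the Wick-rotated graph is, up to an overall sign in case (ii), the analytic continuation $y\mapsto i\tilde t$ of the Euclidean induced metric, so $K$ of the new surface is $\pm$ the continuation of $K_{\mathbb E}$; but it is cleanest to verify this along $\{y=0\}$, where the continuation is the identity and every quantity is visibly real. This sign-and-reality bookkeeping, not anything deeper, is the main obstacle.
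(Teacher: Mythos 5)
Your proposal is correct and follows essentially the same route as the paper: the reality/parity criterion via Taylor expansion (the paper's Lemma \ref{lemma:symmetry}), the direct substitution showing the Wick-rotated function solves (\ref{eq:BI}) up to a factor of $i$ (Propositions \ref{prop:BI and M_even} and \ref{prop:BI and M_odd}), and the computation of $\mathrm{I}$, $\mathrm{II}$ and $K$ for the rotated graph yielding $K_{h}=\pm K_f$ at the rotated argument, with the no-umbilic hypothesis forcing $K_f<0$ on $y=0$ and continuity finishing the argument. The only cosmetic difference is that you evaluate everything on the symmetry axis first, whereas the paper keeps the identity $K_{h}(\tilde t,\tilde x)=\pm K_f(\cdot,i\,\cdot)$ on a neighbourhood and then restricts; both are sound.
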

We also prove similar correspondences for solutions to (\ref{eq:ZMC}) and (\ref{eq:BI}) in Theorem \ref{thm:BI and ZMC}, and for solutions to (\ref{eq:M}) and (\ref{eq:ZMC}) in Theorem \ref{thm:M and ZMC}.

Moreover, we study in Section \ref{Sec.4} Wick rotations starting from {\it lightlike points}, that is, points on which the metric of a surface is degenerate. In general, maximal surfaces and timelike minimal surfaces in $\mathbb{L}^3$ have {\it singular points}, on which surfaces are not immersed. Such singular points appear as lightlike points and most of singular points have been studied by Weierstrass type representation formulas (see, for example, \cite{ER,FLS,FujimoriETAL3,Kobayashi,UY06}) and the singular Bj\"orling formula (cf.\ \cite{KKSY,KY}) on isothermal coordinates. However it is known that there is a case that lightlike points consist of a lightlike line, on which the isothermal coordinates break down (see \cite[Section 1]{FujimoriETAL1} and also \cite[Lemma 3.2]{KKSY}, \cite[Corollary 3.3]{KY}). Recently such surfaces have been studied intensively. In \cite{FujimoriETAL1}, zero mean curvature surfaces with lightlike lines were categorized into the following six classes (the definition of each class is given in Section \ref{Sec.4})
\[
\alpha^+,\quad \alpha^0_I,\quad \alpha^0_{II},\quad \alpha^-_{I},\quad \alpha^-_{II},\quad \alpha^-_{III},
\]
and many examples were given in \cite{A1,FujimoriETAL1,FujimoriETAL2,FujimoriETAL3,UY}. In particular, for each class as above, the existence of a zero mean curvature surface which can have any possible causal character along a lightlike line was proved in \cite{UY}. However, since one cannot take isothermal coordinates near lightlike lines, there is no known explcit representation formula for such surfaces. In this paper, we give a transformation theory for zero mean curvature surfaces with lightlike lines via Wick rotations. More precisely, we prove the following transformation method via Wick rotations (Theorem \ref{thm:transformations of L-line}).

\begin{theor}\label{maintheorem2}
Let $t=g(x,y)$ be a real analytic solution to (\ref{eq:ZMC}) with a lightlike line segment $L$ which contains the origin in $\mathbb{L}^3$.
\begin{itemize}
\item[(i)] If $g$ is even with respect to the $x$-axis, then the solution $\tilde{y}=h(\tilde{t},\tilde{x})=g(i\tilde{x},\tilde{t})$ in $\mathbb{L}^3(\tilde{t},\tilde{x},\tilde{y})$ to (\ref{eq:BI}) also has a lightlike line segment $\tilde{L}$. Moreover these two solutions belong to the same class as above along $L$ and $\tilde{L}$, where $\tilde{t}=y$, $\tilde{x}=x$ and $\tilde{y}=t$.
\item[(ii)] If $g$ is odd with respect to the $x$-axis, then the solution $\tilde{y}=h(\tilde{t},\tilde{x})=-ig(i\tilde{t},\tilde{x})$ in $\mathbb{L}^3(\tilde{t},\tilde{x},\tilde{y})$ to (\ref{eq:BI}) also has a lightlike line segment $\tilde{L}$, where $\tilde{t}=x$, $\tilde{x}=y$ and $\tilde{y}=t$. Moreover each of solutions belongs to the class $\alpha^+$, $\alpha^0_{I}$ or $\alpha^-_{I}$. A solution of $\alpha^+$ (resp.\ $\alpha^-_{I}$) type is transformed to a solution of $\alpha^-_{I}$ (resp.\ $\alpha^+$) types, and a graph of type $\alpha^0_{I}$ is transformed to a graph of type $\alpha^0_{I}$.
\end{itemize}
\end{theor}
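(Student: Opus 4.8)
The plan is to verify, in turn, that $h$ is a well-defined real analytic solution to (\ref{eq:BI}), that the graph of $h$ carries a lightlike line segment $\tilde L$, and that the class of $h$ along $\tilde L$ is related to the class of $g$ along $L$ as stated; only the last point is substantial. First, since $g$ is real analytic near the origin it extends to a holomorphic function on a bidisc in $\mathbb{C}^2$, so $h(\tilde t,\tilde x)=g(i\tilde x,\tilde t)$ in case (i) --- resp.\ $h(\tilde t,\tilde x)=-i\,g(i\tilde t,\tilde x)$ in case (ii) --- is holomorphic for $(\tilde t,\tilde x)$ near $0$, and the parity hypothesis ($g$ even, resp.\ odd, in $x$) makes every coefficient of its Taylor series real, so $h$ is real analytic on a real neighbourhood of $0$. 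That $h$ solves (\ref{eq:BI}) is the computation already behind Theorem \ref{thm:BI and ZMC}: writing $h_{\tilde t},h_{\tilde x},h_{\tilde t\tilde t},\dots$ through the partials of $g$ at the complexified argument and using $i^2=-1$, the left-hand side of (\ref{eq:BI}) evaluated on $h$ equals a nonzero constant multiple (namely $-1$, resp.\ $-i$) of the holomorphic extension of the left-hand side of (\ref{eq:ZMC}) evaluated at that argument; the latter is identically zero since $g$ solves (\ref{eq:ZMC}), hence so is the former.

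Next I would put $g$ into a normal form adapted to $L$. After a rigid motion of $\mathbb{L}^3$ (cf.\ Section \ref{Sec.4}) we may take $L$ to project onto a coordinate axis of the $xy$-plane, and the parity hypothesis pins down which one, since the alternative would make $L$ spacelike (if $g$ is even in $x$ then $g(x,0)$ is an even affine function, hence constant; if $g$ is odd in $x$ then $g(0,y)\equiv 0$). Thus in case (i), $L$ lies over $\{x=0\}$ and, after a translation, $g(0,y)=\varepsilon y$ with $\varepsilon=\pm 1$, $g_x(0,y)\equiv 0$, i.e.\ $g(x,y)=\varepsilon y+\sum_{j\ge 1}a_{2j}(y)x^{2j}$; in case (ii), $L$ lies over $\{y=0\}$ and $g(x,0)=\varepsilon x$, $g_y(x,0)\equiv 0$, i.e.\ $g(x,y)=\varepsilon x+\sum_{k\ge 2}b_k(x)y^k$ with each $b_k$ odd. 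In both cases $h(\tilde t,0)=\varepsilon\tilde t$, so $\tilde L:=\{(\tilde t,0,\varepsilon\tilde t)\}$ is a null line of $\mathbb{L}^3(\tilde t,\tilde x,\tilde y)$ contained in the graph of $h$. Writing $D_g:=1-g_x^2-g_y^2$ and $D_h:=h_{\tilde t}^2-h_{\tilde x}^2-1$ (the discriminants of the respective first fundamental forms, which detect causal type), the chain-rule computation of the first paragraph gives the clean identities $D_h(\tilde t,\tilde x)=-D_g(i\tilde x,\tilde t)$ in case (i) and $D_h(\tilde t,\tilde x)=-D_g(i\tilde t,\tilde x)$ in case (ii). Evaluating at $\tilde x=0$ and using $g_x(0,\cdot)\equiv 0$, resp.\ $g_y(\cdot,0)\equiv 0$, yields $h_{\tilde x}(\tilde t,0)=0$ and $h_{\tilde t}(\tilde t,0)^2=1$, so $D_h\equiv 0$ along $\tilde L$: hence $\tilde L$ is a lightlike line segment of the graph of $h$.

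The core of the proof is to track the class. Here one uses the definitions of $\alpha^{+},\alpha^{0}_{I},\alpha^{0}_{II},\alpha^{-}_{I},\alpha^{-}_{II},\alpha^{-}_{III}$ from Section \ref{Sec.4}, each of which is a discrete invariant of the germ of the surface along the lightlike line, expressible --- after solving (\ref{eq:ZMC}) recursively in powers of the transverse variable --- through the profile coefficients ($a_{2j}$, resp.\ $b_k$), equivalently through the Taylor coefficients of $D_g$ along $L$. In case (i), $D_g$ is even in $x$, say $D_g=\sum_{j\ge 1}D_{2j}(y)\,x^{2j}$ (no constant term, as $L$ is lightlike), so $D_h(\tilde t,\tilde x)=\sum_{j\ge 1}(-1)^{j+1}D_{2j}(\tilde t)\,\tilde x^{2j}$ and $a_{2j}\mapsto(-1)^{j}a_{2j}$; in particular the leading coefficient $D_2$ is unchanged, and one checks class by class that the remaining (alternating) sign changes leave the invariant intact, so $g$ and $h$ lie in the same class. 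In case (ii), oddness of the $b_k$ gives $b_k(0)=0$, and feeding this into the recursion for (\ref{eq:ZMC}) eliminates exactly the data separating $\alpha^{0}_{II},\alpha^{-}_{II},\alpha^{-}_{III}$ from the remaining three, so $g$ (and hence $h$) lies in $\alpha^{+}$, $\alpha^{0}_{I}$ or $\alpha^{-}_{I}$. Moreover a short computation gives, near the origin, $D_g(x,y)=-2\varepsilon b_2'(0)\,y^2+(\text{higher order})$, hence $D_h(\tilde t,\tilde x)=-D_g(i\tilde t,\tilde x)=2\varepsilon b_2'(0)\,\tilde x^2+(\text{higher order})$ (using $b_2(0)=0$): the leading coefficient of the causal-type function along the lightlike line reverses sign when $b_2'(0)\ne 0$, which is precisely the interchange $\alpha^{+}\leftrightarrow\alpha^{-}_{I}$, while when $b_2'(0)=0$ (class $\alpha^{0}_{I}$) the degenerate case --- and the finer invariant distinguishing it from $\alpha^{0}_{II}$ --- is preserved, by the same kind of argument as in case (i).

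The hard part is this last step, on two counts. In case (i) one must go through the six classes and verify that every defining invariant is insensitive to the substitution-induced sign changes $D_{2j}\mapsto(-1)^{j+1}D_{2j}$, $a_{2j}\mapsto(-1)^{j}a_{2j}$; this is where the bulk of the bookkeeping lies. In case (ii) one must show rigorously that oddness of $g$ in $x$ really does obstruct $\alpha^{0}_{II},\alpha^{-}_{II},\alpha^{-}_{III}$ --- that is, run the recursive solution of (\ref{eq:ZMC}) along $L$ and check that the relevant coefficients are forced to vanish --- and then confirm that $\alpha^{0}_{I}$ is genuinely fixed rather than mapped to $\alpha^{0}_{II}$. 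The remaining steps, and the verification that $h$ solves (\ref{eq:BI}) in particular, are routine once Theorem \ref{thm:BI and ZMC} is in hand.
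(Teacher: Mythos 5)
Your first two paragraphs (reality and analyticity of $h$, the fact that it solves \eqref{eq:BI}, and the existence of the lightlike segment $\tilde L$ via the vanishing of the discriminant along $\tilde x=0$) are sound and match the paper's Lemma \ref{Lemma:lightlikelines}. The gap is in the class-tracking step, and it is not merely an omitted verification: the mechanism you propose is broken. The six classes are defined through the approximation function $\alpha_g(y)=g_{xx}(0,y)$ of the surface written as a graph $t=g(x,y)$ over the \emph{spacelike} plane, normalized so that $L\subset\{(y,0,y)\}$; the function $\alpha_g$ is constrained to satisfy the Riccati equation \eqref{eq:alpha}, and the class is read off from which explicit solution it is. Your Wick-rotated surface is a graph $\tilde y=h(\tilde t,\tilde x)$ over the \emph{timelike} plane, and you never convert it back to a graph $\tilde t=f(\tilde x,\tilde y)$ over the spacelike plane before reading off its invariants. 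This conversion is not cosmetic: in case (i) the naive substitution gives $h_{\tilde x\tilde x}(\tilde y,0)=-g_{xx}(0,\tilde y)=-\alpha_g(\tilde y)$, and $-\alpha_g$ does not satisfy \eqref{eq:alpha} for any $\mu$ (e.g.\ $\tan(y+c)$ solves $\alpha'-\alpha^2-1=0$, not $\alpha'+\alpha^2+\mu=0$), so your proposed bookkeeping $a_{2j}\mapsto(-1)^ja_{2j}$ assigns to $h$ a ``profile'' that is not the approximation function of any zero mean curvature graph, and the class-by-class comparison you defer to cannot be carried out on that data. The paper's proof supplies exactly the missing step: implicit differentiation of $\tilde y=h(f(\tilde x,\tilde y),\tilde x)$ gives $f_{\tilde x\tilde x}(0,\tilde y)=-h_{\tilde x\tilde x}(\tilde y,0)/h_{\tilde t}(\tilde y,0)$ (equations \eqref{eq:f_x}--\eqref{eq:f_xx}), whose extra sign cancels the $i^2$ and yields $\alpha_h=\alpha_g$ on the nose, so no case analysis is needed in (i).

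The same structural point collapses the ``hard part'' you identify in case (ii). Once one knows the class is completely encoded by the single function $\alpha$ solving \eqref{eq:alpha}, there is no recursion through \eqref{eq:ZMC} to run: oddness of $g$ in $x$ gives $\alpha_g(x)=g_{yy}(x,0)$ odd, hence $\alpha_g(0)=0$, which among the six listed solutions immediately excludes $\alpha^0_{II}$, $\alpha^-_{II}$ and $\alpha^-_{III}$ and forces $c=0$ in $\alpha^+$ and $\alpha^-_{I}$; the conversion above then gives $\alpha_h(\tilde y)=i\,\alpha_g(i\tilde y)$, and $i\cdot(-\tan(i\tilde y))=\tanh\tilde y$, $i\cdot\tanh(i\tilde y)=-\tan\tilde y$, $0\mapsto 0$ finishes the proof. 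Your sign computation via $D_g$ correctly detects that the characteristic $\mu$ flips in case (ii) and is preserved in case (i), but $\mu$ alone does not separate $\alpha^0_{I}$ from $\alpha^0_{II}$, nor $\alpha^-_{I}$ from $\alpha^-_{II}$ and $\alpha^-_{III}$, so the argument as written does not reach the stated conclusion; you need the value $\alpha(0)$ (equivalently the correctly converted approximation function), not just the leading Taylor coefficient of the discriminant.
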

In Section \ref{Sec.5}, by using Theorem \ref{maintheorem2} and the non-commutativity of Wick rotations and isometries in the ambient space, we give new relations among many examples, most of them were constructed in \cite{FujimoriETAL1}.

\section{Preliminaries\label{Sec.2}}

In this paper, we deal only with real analytic immersions and real analytic graphs. We denote by $\mathbb{L}^3=\mathbb{L}^3(t,x,y)$ the Lorentz-Minkowski 3-space with the metric $\langle , \rangle =-(dt)^2+(dx)^2+(dy)^2$, where $(t, x, y)$ are the canonical coordinates. An immersion $ X\colon \Omega\rightarrow \mathbb{L}^3$ of a domain $\Omega \subset \mathbb{R}^2$ into $\mathbb{L}^3$ is called {\it spacelike} (resp.\ {\it timelike}, {\it lightlike}) at a point $p\in \Omega$ if its {\it first fundamental form} $\mathrm{I}=X^*{\langle , \rangle}$ is Riemannian (resp.\ Lorentzian, degenerate) at $p$. For a spacelike (or timelike) immersion, we can take a timelike (or spacelike) unit normal vector field $\nu$. Let $ \nabla $ denote the Levi-Civita connection on $ \mathbb{L}^3 $, and suppose $\xi$ and $\eta$ are  smooth vector fields on $\Omega$. Then the {\it shape operator} (or the {\it Weingarten map}) $S$ of $X$ and the {\it second fundamental form} $\mathrm{II}$ are defined by
 $$ dX(S(\xi))=-\nabla_{\xi}\nu,\quad {\rm II}(\xi,\eta)=\langle \nabla_{dX(\xi)}dX(\eta), \nu \rangle.$$
The {\it mean curvature} $H$ and the {\it Gaussian curvature} $K$ of the surface $X$ are defined by
 $$ H=\frac{\varepsilon}{2}\tr{S},\quad K=\varepsilon\det{S},$$%where $\varepsilon=\langle \nu,\nu \rangle =\pm1$, see \cite{L} for details.
 where 
\begin{equation*}
\varepsilon = \langle \nu,\nu \rangle=
\begin{cases}
1  & \text{if $X$ is timelike,}\\
-1 & \text{if $X$ is spacelike.}
\end{cases}
\end{equation*} 

An eigenvalue of the shape operator $S$ is called a {\it principal curvature} of $X$. When an immersion $X$ is spacelike, the shape operator $S$ is symmetric. Hence we can always take real principal curvatures at any point, and a point on which two principal curvatures are equal is called an {\it umbilic point}. On the other hand, the shape operator $S$ for a timelike immersion is not always diagonalizable even over the complex number field $\mathbb{C}$. In this case, a point on which $S$ has same real principal curvatures is called an {\it umbilic point}, and a point on which $S$ is non-diagonalizable over $\mathbb{C}$ is called a {\it quasi-umbilic point}, see \cite{Clelland} or \cite{A2} for details.

After a rigid motion in $\mathbb{L}^3$, any spacelike or timelike surface can be written as a graph over a domain in the spacelike $xy$-plane, which has the form 
      $$ X(x,y)=(g(x,y),x,y),$$ 
  or a graph over a domain in the timelike $tx$-plane, which has the form
      $$ Y(x,t)=(t,x,h(x,t)). $$ 
The first fundamental form for $X$ or $Y$, we denote them by $\mathrm{I}_g$ and $\mathrm{I}_h$, are
 \[
\mathrm{I_g}=(1-g_x^2)dx^2-2g_xg_ydxdy+(1-g_y^2)dy^2\quad \text{and}\quad \mathrm{I_h}=(-1+h_t^2)dt^2+2h_th_xdtdx+(1+h_x^2)dx^2.
\]

 The unit normal vector fields for each of the case are given by           
  \[ \nu_g(x,y)=\frac{(1,g_x,g_y)}{\sqrt{|1-g_x^2-g_y^2|}}\quad \text{and}\quad \nu_h(t,x)=\frac{(h_t,-h_x,1)}{\sqrt{|-1-h_x^2+h_t^2|}}.\]
Put $\varepsilon_g:=\langle \nu_g,\nu_g \rangle$ and $\varepsilon_h:=\langle \nu_h,\nu_h \rangle$. The second fundamental forms of $X$ and $Y$, we denote them by $\mathrm{II}_g$ and $\mathrm{II}_h$, are
\[
\mathrm{II}_g=-\left(\frac{g_{xx}}{W_g}dx^2+2\frac{g_{xy}}{W_g}dxdy+\frac{g_{yy}}{W_g}dy^2\right),\quad W_g:=\sqrt{-\varepsilon_g(1-g_x^2-g_y^2)},
\]

\[
\mathrm{II}_h=\left(\frac{h_{tt}}{W_h}dt^2+2\frac{h_{tx}}{W_h}dtdx+\frac{h_{xx}}{W_h}dx^2\right),\quad W_h:=\sqrt{-\varepsilon_h(1-h_t^2+h_x^2)}.
\]
The mean curvature $H_g$ and the Gaussian curvature $K_g$ of $X$ is written as
\begin{equation*}\label{eq:H_gK_g}
H_g=\frac{(1-g_y^2)g_{xx}+2g_xg_yg_{xy}+(1-g_x^2)g_{yy}}{2W_g^3}\quad \text{and}\quad K_g=-\frac{g_{xx}g_{yy}-g_{xy}^2}{W_g^4}.
\end{equation*}
While, the mean curvature $H_h$ and the Gaussian curvature $K_h$ of $Y$ is written as
\begin{equation*}\label{eq:H_hK_h}
H_h=\frac{(1-h_t^2)h_{xx}+2h_th_xh_{tx}-(1+h_x^2)h_{tt}}{2W_h^3}\quad \text{and}\quad K_h=-\frac{h_{tt}h_{xx}-h_{tx}^2}{W_h^4}.
\end{equation*}
Then the condition that the mean curvature vanishes identically leads to the equations (\ref{eq:ZMC}) and (\ref{eq:BI}), respectively. A surface in $\mathbb{L}^3$ whose mean curvature vanishes identically is called a {\it zero mean curvature surface}, and a spacelike (resp.\ timelike) zero mean curvature surface is called a {\it maximal surface} (resp.\ {\it timelike minimal surface}).

On the other hand, if one considers a graph $ Z(x,y)=(x,y,f(x,y)) $ in Euclidean 3-space $ \mathbb{E}^3=\mathbb{E}^3(x,y,z) $ over a domain in the $ xy $-plane, then the  mean curvature is written as
\[
H_f=\frac{(1+f_y^2)f_{xx}-2f_xf_yf_{xy}+(1+f_x^2)f_{yy}}{2W_f^3},\quad W_f:=\sqrt{1+f_x^2+f_y^2}
\]
Then $H_f$ vanishes identically, that is, $Z$ is a {\it minimal surface} if and only if $f$ satisfies \eqref{eq:M}.  The Gaussian curvature of $Z$ is written as
\begin{equation}\label{eq:K_f}
K_f=\frac{f_{xx}f_{yy}-f_{xy}^2}{W_f^4}.
\end{equation}
From Section \ref{Sec.3}, we discuss correspondences among minimal surfaces in $\mathbb{E}^3$ and zero mean curvature surfaces in $\mathbb{L}^3$ via Wick rotations as explained in Introduction.

\section{Real and imaginary solutions}\label{Sec.3}
In this section we give geometric relationships among solutions to the three equations (\ref{eq:M}), (\ref{eq:ZMC}) and (\ref{eq:BI}) under Wick rotations mentioned in Introduction. First we prove a necessary and sufficient condition for getting the real or imaginary solution after the Wick rotation of a solution.
\begin{lemma}\label{lemma:symmetry}
Let $\varphi=\varphi(x_1,x_2)$ be a real analytic function, and consider its Wick rotation $\psi(x_1,x_2)=\varphi(ix_1,x_2)$, where $x_1$ and $x_2$ are two of the canonical coordinates in $\mathbb{E}^3$ or $\mathbb{L}^3$. Then $\psi$ is real (resp.\ imaginary) valued if and only if $\varphi$ is even (resp.\ odd) with respect to the $x_1$-axis. Moreover $\psi$ is also even (resp.\ odd) with respect to the $x_1$-axis.
\end{lemma}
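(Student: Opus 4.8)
The plan is to reduce everything to the Taylor expansion of $\varphi$ along the $x_1$-axis and to track how multiplication by $i$ interacts with the parity of the exponent of $x_1$. Since $\varphi$ is real analytic and real valued, in a neighborhood of each point $(0,c)$ of the $x_1$-axis it is represented by a convergent power series with real coefficients, which I would group as $\varphi(x_1,x_2)=\sum_{n\ge 0}a_n(x_2)\,x_1^{\,n}$, where each $a_n(x_2)=\tfrac{1}{n!}\partial_{x_1}^n\varphi(0,x_2)$ is real analytic and real valued in $x_2$. The Wick rotation is obtained by the substitution $x_1\mapsto ix_1$ (which makes sense because $\varphi$ extends holomorphically to a complex neighborhood of its real domain), so $\psi(x_1,x_2)=\sum_{n\ge 0}i^{\,n}a_n(x_2)\,x_1^{\,n}$. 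Writing $n=2k$ or $n=2k+1$ and using $i^{2k}=(-1)^k$ and $i^{2k+1}=i(-1)^k$, one gets, for real $x_1,x_2$ near the axis, $\mathrm{Re}\,\psi=\sum_{k\ge0}(-1)^k a_{2k}(x_2)x_1^{2k}$ and $\mathrm{Im}\,\psi=\sum_{k\ge0}(-1)^k a_{2k+1}(x_2)x_1^{2k+1}$; that is, the real part collects exactly the even-order terms and the imaginary part exactly the odd-order terms.

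From this the local equivalence is immediate. If $\varphi$ is even in $x_1$ then $a_n\equiv 0$ for every odd $n$, so $\psi(x_1,x_2)=\sum_{k\ge0}(-1)^k a_{2k}(x_2)x_1^{2k}$ is real valued and is a power series in $x_1^2$, hence also even in $x_1$; symmetrically, if $\varphi$ is odd then $\psi=i\sum_{k\ge0}(-1)^k a_{2k+1}(x_2)x_1^{2k+1}$ is purely imaginary and odd in $x_1$. Conversely, if $\psi$ is real valued near the axis then $\mathrm{Im}\,\psi$ vanishes identically there; fixing $x_2$ and using that a convergent one-variable power series vanishing on an interval has all coefficients zero, I conclude $a_n\equiv 0$ for all odd $n$, i.e.\ $\varphi$ is even in $x_1$. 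The imaginary-valued case is identical with the roles of even and odd interchanged.

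Finally I would globalize. The relevant domain is connected and symmetric with respect to the $x_1$-axis (so that $(x_1,x_2)\mapsto(-x_1,x_2)$ is a well-defined self-map), and the functions $\varphi(x_1,x_2)-\varphi(-x_1,x_2)$, respectively $\varphi(x_1,x_2)+\varphi(-x_1,x_2)$, together with the corresponding expressions for $\psi$, as well as $\mathrm{Im}\,\psi$ and $\mathrm{Re}\,\psi$, are all real analytic. The local analysis shows that the appropriate one of these vanishes on a full neighborhood of the $x_1$-axis, so by the identity theorem for real analytic functions on a connected set it vanishes on the whole domain, which yields both the global statement that $\psi$ is real (resp.\ imaginary) valued and the global statement that $\psi$ is even (resp.\ odd) in $x_1$. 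I expect the only genuine subtlety to be precisely this passage from "near the axis" to "on the whole domain": it uses the connectedness of the domain and its symmetry under $x_1\mapsto -x_1$, without which the conclusion could fail on a component not meeting the axis. Everything else is bookkeeping with powers of $i$.
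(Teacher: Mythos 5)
Your proof is correct and takes essentially the same route as the paper's: expand $\varphi$ in powers of $x_1$ along the axis, substitute $x_1\mapsto ix_1$, and read off the real/imaginary parts as the even/odd-order terms. The additional care you take in globalizing via the identity theorem on a connected symmetric domain is a refinement the paper leaves implicit, but the core argument is the same.
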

\begin{proof}
Taking the following expansion near $x_1=0$
\begin{equation*}
\varphi(x_1,x_2)=\sum_{n=0}^{\infty}a_n(x_2)x_1^n,
\end{equation*}
$\psi$ can be written as
\begin{equation*}
\psi(x_1,x_2)=\sum_{n=0}^{\infty}a_{2n}(x_2)(-1)^nx_1^{2n}+i\sum_{m=0}^{\infty}a_{2m+1}(x_2)(-1)^mx_1^{2m+1}.
\end{equation*}
Therefore $\psi$ is real (resp.\ imaginary) valued if and only if the odd (resp.\ even) terms vanish, which proves the desired result.
\end{proof}

 \subsection{Geometric properties of transformations between solutions to (\ref{eq:M}) and (\ref{eq:BI})}
By the correspondence between solutions to (\ref{eq:M}) and (\ref{eq:BI}) under Wick rotations which was mentioned in \cite{D,Kamien} and Lemma \ref{lemma:symmetry}, we have the following.

\begin{proposition}\label{prop:BI and M_even}
For a solution $z=f(x,y)$ to (\ref{eq:M}) with even symmetry with respect to the $y$-axis, $\tilde{y}=f(\tilde{x},i\tilde{t})$ is a real solution to (\ref{eq:BI}) in $\mathbb{L}^3(\tilde{t},\tilde{x},\tilde{y})$. Conversely, for a solution $\tilde{y}=h(\tilde{t},\tilde{x})$ to (\ref{eq:BI}) in $\mathbb{L}^3(\tilde{t},\tilde{x},\tilde{y})$ with even symmetry with respect to the $\tilde{t}$-axis, $z=h(iy,x)$ is a real solution to (\ref{eq:M}) in $\mathbb{E}^3(x,y,z)$, where $\tilde{t}=y$, $\tilde{x}=x$ and $\tilde{y}=z$.
\end{proposition}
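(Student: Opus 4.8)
The plan is to reduce the statement to two ingredients: Lemma~\ref{lemma:symmetry}, which handles the reality of the Wick-rotated function, and an elementary chain-rule computation showing that the left-hand side of the minimal surface equation (\ref{eq:M}) and the left-hand side of the Born--Infeld equation (\ref{eq:BI}) are interchanged, up to a bookkeeping of signs, under the substitution $y\mapsto i\tilde t$. For the reality part: since $f$ is real analytic and even in $y$, it continues holomorphically in its second variable to a complex neighbourhood of the real domain, and Lemma~\ref{lemma:symmetry} applied with the roles of the two coordinates exchanged (so that the rotated variable is the $y$-slot) shows that $h(\tilde t,\tilde x):=f(\tilde x,i\tilde t)$ is real valued and real analytic on a genuine real domain; the passage to $\mathbb{L}^3(\tilde t,\tilde x,\tilde y)$ with $\tilde t=y$, $\tilde x=x$, $\tilde y=z$ is then pure relabelling.

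For the equation, I would differentiate $h(\tilde t,\tilde x)=f(\tilde x,i\tilde t)$ by the chain rule, obtaining $h_{\tilde t}=if_y$, $h_{\tilde x}=f_x$, $h_{\tilde t\tilde t}=-f_{yy}$, $h_{\tilde t\tilde x}=if_{xy}$ and $h_{\tilde x\tilde x}=f_{xx}$, all right-hand sides being evaluated at $(\tilde x,i\tilde t)$. Substituting into the left-hand side of (\ref{eq:BI}) and collecting the two factors of $i$ turns it into $(1+f_y^2)f_{xx}-2f_xf_yf_{xy}+(1+f_x^2)f_{yy}$ evaluated at $(\tilde x,i\tilde t)$; this vanishes because $f$ solves (\ref{eq:M}) on its real domain and therefore, by the identity theorem, on the holomorphic extension as well. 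The converse is the mirror-image computation: given $\tilde y=h(\tilde t,\tilde x)$ solving (\ref{eq:BI}) and even in $\tilde t$, Lemma~\ref{lemma:symmetry} makes $f(x,y):=h(iy,x)$ real valued, and the relations $f_y=ih_{\tilde t}$, $f_x=h_{\tilde x}$, $f_{yy}=-h_{\tilde t\tilde t}$, $f_{xy}=ih_{\tilde t\tilde x}$, $f_{xx}=h_{\tilde x\tilde x}$ convert the left-hand side of (\ref{eq:M}) into the left-hand side of (\ref{eq:BI}), which is zero.

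I do not expect a real obstacle here: once the chain rule is set up correctly, the whole argument is sign-tracking through $i^2=-1$. The one point that deserves an explicit word is that the computation takes place at a complex value of one variable, so one should note---this is already built into Lemma~\ref{lemma:symmetry}---that real analyticity permits the holomorphic continuation, that the defining PDE persists on it by the identity theorem, and that the evenness hypothesis is precisely what makes the resulting function real valued.
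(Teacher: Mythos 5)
Your argument is correct and matches the paper's treatment: the paper disposes of the even case by invoking Lemma~\ref{lemma:symmetry} together with the known Wick-rotation correspondence, and the explicit chain-rule/identity-theorem computation you carry out is exactly the one the paper writes down for the parallel odd case in Proposition~\ref{prop:BI and M_odd}. Your sign bookkeeping ($h_{\tilde t}=if_y$, $h_{\tilde t\tilde t}=-f_{yy}$, etc.) checks out in both directions.
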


From imaginary solutions obtained by Wick rotations, we can also construct real solutions as follows:
 \begin{proposition}\label{prop:BI and M_odd}
For a solution $z=f(x,y)$ to (\ref{eq:M}) with odd symmetry to the $y$-axis, $\tilde{y}=-if(\tilde{t},i\tilde{x})$ is a real solution to (\ref{eq:BI}) in $\mathbb{L}^3(\tilde{t},\tilde{x},\tilde{y})$. Conversely, for a solution $\tilde{y}=h(\tilde{t},\tilde{x})$ to (\ref{eq:BI}) in $\mathbb{L}^3(\tilde{t},\tilde{x},\tilde{y})$ with odd symmetry to the $\tilde{x}$-axis, $z=-ih(x,iy)$ is a real solution to (\ref{eq:M}) in $\mathbb{E}^3(x,y,z)$, where $\tilde{t}=x$, $\tilde{x}=y$ and $\tilde{y}=z$.
\end{proposition}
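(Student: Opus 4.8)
The plan is to reduce both directions to a direct substitution into the Born--Infeld equation \eqref{eq:BI} and the minimal surface equation \eqref{eq:M}, combined with the reality criterion of Lemma~\ref{lemma:symmetry}. The key point is that the equations \eqref{eq:M}, \eqref{eq:ZMC} and \eqref{eq:BI} are purely algebraic in the first and second partial derivatives, so the effect of a Wick rotation in one variable is governed entirely by the chain rule: if $\varphi(x_1,x_2)$ is real analytic and $\psi(x_1,x_2)=\varphi(ix_1,x_2)$, then $\psi_{x_1}=i\varphi_{x_1}$, $\psi_{x_1x_1}=-\varphi_{x_1x_1}$, $\psi_{x_1x_2}=i\varphi_{x_1x_2}$, while $x_2$-derivatives are unaffected. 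Multiplying $\psi$ by $-i$ simply rescales all derivatives by $-i$. So the strategy is: (1) verify the PDE identity formally over $\mathbb{C}$, i.e.\ that the stated substitutions carry a solution of one equation to a (possibly complex) solution of the other; (2) invoke Lemma~\ref{lemma:symmetry} to pin down when the resulting function is real; (3) check the symmetry of the output so that the ``conversely'' direction applies and the correspondence is well defined.

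For Proposition~\ref{prop:BI and M_odd}, first direction: start with $z=f(x,y)$ solving \eqref{eq:M} and odd in $y$. Set $h(\tilde t,\tilde x)=-if(\tilde t,i\tilde x)$, so here the Wick rotation is applied to the \emph{second} slot of $f$ and the whole function is multiplied by $-i$; we also rename $(x,y)\mapsto(\tilde t,\tilde x)$. Computing derivatives: with $u(\tilde t,\tilde x):=f(\tilde t,i\tilde x)$ one has $u_{\tilde t}=f_x$, $u_{\tilde x}=if_y$, $u_{\tilde t\tilde t}=f_{xx}$, $u_{\tilde t\tilde x}=if_{xy}$, $u_{\tilde x\tilde x}=-f_{yy}$ (all evaluated at $(\tilde t,i\tilde x)$), and then $h=-iu$ gives $h_{\tilde t}=-if_x$, $h_{\tilde x}=f_y$, $h_{\tilde t\tilde t}=-if_{xx}$, $h_{\tilde t\tilde x}=f_{xy}$, $h_{\tilde x\tilde x}=if_{yy}$. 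Substituting into the left side of \eqref{eq:BI} with $(t,x)=(\tilde t,\tilde x)$ and simplifying should return $-i$ times the left side of \eqref{eq:M} for $f$ (up to an overall factor), hence vanishes. So $h$ solves \eqref{eq:BI} as a complex analytic identity. Since $f$ is odd in $y$, the map $y\mapsto if(\cdot,iy)$ is real by Lemma~\ref{lemma:symmetry} (the odd case of $\varphi\mapsto\varphi(ix_1,x_2)$ yields an imaginary function, and the extra $-i$ makes it real), so $h$ is real valued; and by the ``moreover'' clause of Lemma~\ref{lemma:symmetry}, $h$ is again odd in $\tilde x$. For the converse, given $h(\tilde t,\tilde x)$ solving \eqref{eq:BI} and odd in $\tilde x$, set $f(x,y)=-ih(x,iy)$ and run exactly the same computation in reverse; oddness of $h$ in $\tilde x$ plus Lemma~\ref{lemma:symmetry} makes $f$ real, and the two substitutions are mutually inverse since $-i\cdot(-i(\,\cdot\,)(i\cdot i\,\cdot)) = -h$ applied twice... more precisely $(f\mapsto h\mapsto f)$ returns the original function because applying $x_1\mapsto ix_1$ twice gives $x_1\mapsto -x_1$ and oddness/multiplication by $(-i)^2=-1$ compensates. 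This closes the loop.

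The routine part is the algebra in step (1): one expands $(1-h_t^2)h_{xx}+2h_th_xh_{tx}-(1+h_x^2)h_{tt}$ with the substituted derivatives and watches the signs collapse to (a nonzero multiple of) $(1+f_y^2)f_{xx}-2f_xf_yf_{xy}+(1+f_x^2)f_{yy}$. The main obstacle — really the only place care is needed — is bookkeeping of the imaginary units and signs across the three simultaneous operations (Wick rotation of one variable, multiplication of the function by $-i$, and the relabeling of coordinates), making sure the overall prefactor that appears is a nonzero constant so that vanishing of one side is equivalent to vanishing of the other, and making sure the reality/symmetry bookkeeping via Lemma~\ref{lemma:symmetry} is applied to the correct variable in each direction. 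Since \eqref{eq:M} and \eqref{eq:BI} differ precisely by the replacements $y\leftrightarrow$ (timelike variable) together with the sign changes $1+f_y^2\rightsquigarrow 1-h_t^2$ and $+f_{yy}\rightsquigarrow -h_{tt}$, and since $\partial_{x_1}\mapsto i\partial_{x_1}$, $\partial_{x_1}^2\mapsto-\partial_{x_1}^2$ under the Wick rotation, these are exactly the sign changes produced by the substitution, so the identity is forced; the proof is then complete.
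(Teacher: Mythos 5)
Your proposal is correct and follows essentially the same route as the paper: compute $h_{\tilde t}=-if_x$, $h_{\tilde x}=f_y$, $h_{\tilde t\tilde t}=-if_{xx}$, $h_{\tilde t\tilde x}=f_{xy}$, $h_{\tilde x\tilde x}=if_{yy}$, substitute into \eqref{eq:BI} to get a nonzero constant multiple (the paper computes it as $i$, not $-i$) of the left side of \eqref{eq:M} evaluated at $(x,iy)$, which vanishes by the identity theorem, and use Lemma~\ref{lemma:symmetry} for reality. The paper proves only the first direction and leaves the converse symmetric, as you do.
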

\begin{proof}
We prove only the first part. If we set $h(\tilde{t},\tilde{x})=-if(\tilde{t},i\tilde{x})$, then $\tilde{y}=h(\tilde{t},\tilde{x})$ is real valued by Lemma \ref{lemma:symmetry}. Since
\[
h_{\tilde{t}}=-if_x,\quad h_{\tilde{x}}=f_y,\quad h_{\tilde{t}\tilde{t}}=-if_{xx},\quad h_{\tilde{t}\tilde{x}}=f_{xy},\quad h_{\tilde{x}\tilde{x}}=if_{yy},
\]
we obtain the relation
\begin{align*}
&\left[(1-h_{\tilde{t}}^2)h_{\tilde{x}\tilde{x}}+2h_{\tilde{t}}h_{\tilde{x}}h_{\tilde{t}\tilde{x}}-(1+h_{\tilde{x}}^2)h_{\tilde{t}\tilde{t}}\right](\tilde{t},\tilde{x})\\
=&i\left[(1+f_y^2)f_{xx}-2f_xf_yf_{xy}+(1+f_x^2)f_{yy}\right](x,iy),
\end{align*}
where $\tilde{t}=x$ and $\tilde{x}=y$.  By (\ref{eq:M}) and the identity theorem, the right hand side of the above equation is equal to zero for any $(x,y)$. Hence $h$ satisfies (\ref{eq:BI}).
\end{proof}

Solutions to (\ref{eq:BI}) obtained from solutions to (\ref{eq:M}) as Propositions \ref{prop:BI and M_even} and \ref{prop:BI and M_odd} are zero mean curvature graphs over a domain of the timelike $tx$-plane in $\mathbb{L}^3(t,x,y)$. From now on, we study geometric properties of these zero mean curvature surfaces.
\begin{theorem}\label{thm:BI and M}
Let $z=f(x,y)$ be a solution to (\ref{eq:M}) without umbilic points on $y=0$. The following statements hold.
\begin{itemize}
\item[(i)] If $f$ is even with respect to the $y$-axis, then the graph $\tilde{y}=f(\tilde{x},i\tilde{t})$ in $\mathbb{L}^3(\tilde{t},\tilde{x},\tilde{y})$ is a timelike minimal surface with negative Gaussian curvature near $\tilde{t}=y=0$, where $\tilde{t}=y$, $\tilde{x}=x$ and $\tilde{y}=z$.
\item[(ii)]If $f$ is odd with respect to the $y$-axis, then the graph $\tilde{y}=-if(\tilde{t},i\tilde{x})$ in $\mathbb{L}^3(\tilde{t},\tilde{x},\tilde{y})$ is a timelike minimal surface with positive Gaussian curvature near $\tilde{x}=y=0$, where $\tilde{t}=x$, $\tilde{x}=y$ and $\tilde{y}=z$.
\end{itemize}
\end{theorem}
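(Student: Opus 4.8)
The two cases are completely parallel, so I would set up the computation for case (i) and remark that (ii) follows by the same bookkeeping applied to the odd-symmetry formulas already appearing in the proof of Proposition~\ref{prop:BI and M_odd}. The starting point is the observation that $f$ being real analytic and even in $y$ means, by Lemma~\ref{lemma:symmetry}, that $h(\tilde t,\tilde x):=f(\tilde x,i\tilde t)$ is real analytic and real valued; Proposition~\ref{prop:BI and M_even} already gives that $h$ solves~(\ref{eq:BI}). So the only thing left is to check (a) that the graph $\tilde Y(\tilde t,\tilde x)=(\tilde t,\tilde x,h(\tilde t,\tilde x))$ is \emph{timelike} near the origin and (b) that its Gaussian curvature $K_h$ is \emph{negative} there. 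Both are pointwise conditions at $\tilde t=0$, so it suffices to compute the relevant quantities along $\tilde t=0$, i.e.\ along $y=0$ on the $f$-side.

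First I would record the derivative dictionary coming from $h(\tilde t,\tilde x)=f(\tilde x,i\tilde t)$ — namely $h_{\tilde t}=if_y$, $h_{\tilde x}=f_x$, $h_{\tilde t\tilde t}=-f_{yy}$, $h_{\tilde t\tilde x}=if_{xy}$, $h_{\tilde x\tilde x}=f_{xx}$ — all evaluated at $(\tilde x,i\tilde t)=(x,iy)$, and then specialize to $\tilde t=y=0$, where by the evenness of $f$ we have $f_y=0$ and $f_{xy}=0$ (these are the first $x_1$-odd-order coefficients in the expansion of Lemma~\ref{lemma:symmetry}, evaluated with $x_1=y$). Hence along $\tilde t=0$: $h_{\tilde t}=0$, $h_{\tilde t\tilde x}=0$, $h_{\tilde x}=f_x$, $h_{\tilde t\tilde t}=-f_{yy}$, $h_{\tilde x\tilde x}=f_{xx}$, all with $f$-derivatives evaluated at $(x,0)$. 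For (a), the first fundamental form $\mathrm I_h$ from Section~\ref{Sec.2} has discriminant governed by $1-h_t^2+h_x^2=1+f_x^2>0$ at $\tilde t=0$, so $W_h$ is a positive real number there and the graph is genuinely timelike in a neighborhood (the sign is open). For (b), plug into $K_h=-\dfrac{h_{tt}h_{xx}-h_{tx}^2}{W_h^4}$: at $\tilde t=0$ this is $-\dfrac{(-f_{yy})f_{xx}-0}{W_h^4}=\dfrac{f_{xx}f_{yy}}{(1+f_x^2)^2}$. Now I compare with the Euclidean Gaussian curvature formula~(\ref{eq:K_f}), $K_f=\dfrac{f_{xx}f_{yy}-f_{xy}^2}{W_f^4}$, which at $y=0$ (where $f_y=f_{xy}=0$) equals $\dfrac{f_{xx}f_{yy}}{(1+f_x^2)^2}$. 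So $K_h=K_f$ along the axis. Since a minimal graph in $\mathbb E^3$ has $K_f\le 0$ everywhere, and the hypothesis ``no umbilic points on $y=0$'' rules out $K_f=0$ there (for a minimal surface, $K=0$ at a point forces that point to be umbilic — indeed flat umbilic, since $H=0$ and the two principal curvatures, being negatives of each other, must both vanish), we get $K_f<0$ on $y=0$, hence $K_h<0$ near $\tilde t=y=0$.

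\textbf{Main obstacle.} The only genuinely delicate point is the implication ``no umbilic point on $y=0$'' $\Rightarrow$ $K_f<0$ there: one must argue that for a solution of the minimal surface equation the vanishing of the Gaussian curvature is equivalent to being an umbilic point. This is where the sign structure of minimal surfaces enters — $H_f=0$ means the principal curvatures are $\kappa$ and $-\kappa$, so $K_f=-\kappa^2\le 0$, with equality iff $\kappa=0$ iff the point is umbilic (both principal curvatures zero). I would state this as a short lemma or inline remark before the main computation. The rest — the derivative dictionary, the evaluation at the axis, and matching $K_h$ with $K_f$ — is bookkeeping, and for case (ii) the analogous computation uses $h(\tilde t,\tilde x)=-if(\tilde t,i\tilde x)$ with $h_{\tilde t}=f_x$, $h_{\tilde x}=f_y$ (wait: with the odd symmetry one evaluates at $\tilde x=0$ where $f(x,0)=0$, $f_y$ survives, $f_x=f_{xy}\cdot$\dots), giving $K_h=-K_f>0$ along $\tilde x=0$ by the same umbilic-point argument; I would present that case telegraphically once the first is done.
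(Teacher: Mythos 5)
Your proposal is correct and follows essentially the same route as the paper: compute the first and second fundamental forms of the Wick-rotated graph, identify $K_h$ with $\pm K_f$ (you do this only on the symmetry axis and invoke openness of the sign, while the paper records the full functional identities $K_{h_1}(\tilde t,\tilde x)=K_f(\tilde x,i\tilde t)$ and $K_{h_2}(\tilde t,\tilde x)=-K_f(\tilde t,i\tilde x)$), and use minimality together with the no-umbilic-point hypothesis to conclude $K_f<0$ there. The only blemish is the garbled derivative dictionary in your parenthetical for case (ii) — the correct relations are $h_{\tilde t}=-if_x$, $h_{\tilde x}=f_y$, $h_{\tilde x\tilde x}=if_{yy}$ as in the proof of Proposition~\ref{prop:BI and M_odd} — but your stated conclusion $K_h=-K_f>0$ along the axis is the right one.
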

\begin{proof}
First we prove (i). We set $h_1(\tilde{t},\tilde{x}):=f(\tilde{x},i\tilde{t})$. Since the first fundamental form of the graph $h_1$ at $(\tilde{t},\tilde{x})=(y,x)$ is
\[
-(1+f_y^2(\tilde{x},i\tilde{t}))d\tilde{t}^2+2if_x(\tilde{x},i\tilde{t})f_y(\tilde{x},i\tilde{t})d\tilde{t}d\tilde{x}+(1+f_x^2(\tilde{x},i\tilde{t}))d\tilde{x}^2,
\]
the graph is timelike near $\tilde{t}=y=0$. If we set $W_f=\sqrt{1+f_x^2+f_y^2}$, then the unit normal vector field is written as
\[
(if_y(\tilde{x},i\tilde{t}),-f_x(\tilde{x},i\tilde{t}),1)/{W_f(\tilde{x},i\tilde{t})}.
\]
By using it, the second fundamental is computed as
\[
-\frac{f_{yy}(\tilde{x},i\tilde{t})}{W_f(\tilde{x},i\tilde{t})}d\tilde{t}^2+2\frac{if_{xy}(\tilde{x},i\tilde{t}))}{W_f(\tilde{x},i\tilde{t})}d\tilde{t}d\tilde{x}+\frac{f_{xx}(\tilde{x},i\tilde{t})}{W_f(\tilde{x},i\tilde{t})}d\tilde{x}^2,
\]
and hence the Gaussian curvature of $h_1$, we denote it by $K_{h_1}$,  is
\begin{equation}\label{eq:Gauusian_BI_even_time}
K_{h_1}(\tilde{t},\tilde{x})=\left(\frac{f_{xx}f_{yy}-f_{xy}^2}{W_f^4}\right)(\tilde{x},i\tilde{t}).
\end{equation}

On the other hand, the Gaussian curvature of the graph $f$, we denoted it by $K_f$, is written as (\ref{eq:K_f}). Since $f$ is minimal and $(x,0)$ is not an umbilic point, $K_f<0$. By (\ref{eq:K_f}) and (\ref{eq:Gauusian_BI_even_time}), we obtain (i). Next, we set $h_2(\tilde{t},\tilde{x}):=-if(\tilde{t},i\tilde{x})$ and prove (ii). Since the first fundamental form of the graph $h_2$ at $(\tilde{t},\tilde{x})=(x,y)$ is
\[
-(1+f_x^2(\tilde{t},i\tilde{x}))d\tilde{t}^2-2if_x(\tilde{x},i\tilde{t})f_y(\tilde{t},i\tilde{x})d\tilde{t}d\tilde{x}+(1+f_y^2(\tilde{t},i\tilde{x}))d\tilde{x}^2,
\]
the graph is also timelike near $\tilde{x}=y=0$. The Gaussian curvature of $h_2$, we denote it by $K_{h_2}$,  is
\begin{equation}\label{eq:Gauusian_BI_odd_time}
K_{h_2}(\tilde{t},\tilde{x})=-\left(\frac{f_{xx}f_{yy}-f_{xy}^2}{W^4}\right)(\tilde{t},i\tilde{x}).
\end{equation}
Therefore, the graph has positive Gaussian curvature near $\tilde{x}=y=0$ by (\ref{eq:K_f}) and (\ref{eq:Gauusian_BI_odd_time}).
\end{proof}

\begin{remark}
An umbilic point $(\tilde{t},\tilde{x})=(y,x)=(0,x)$ of a minimal graph $f$ can be transformed into an umbilic point of the graph $h_1$ or $h_2$ defined as in the proof of Theorem \ref{thm:BI and M}. In fact, for the case (i) in Theorem \ref{thm:BI and M}, $f$ satisfies $f_{xy}(x,0)=0$ by the even symmetry of $f$ with respect to $y$. Hence, $K_{h_1}(0,\tilde{x})=K_{f}(x,0)=0$ is equivalent to the condition $f_{xx}(x,0)f_{yy}(x,0)=0$. If We assume that $f_{xx}(x,0)=0$, we have $f_{yy}(x,0)=0$ by (\ref{eq:M}), and the converse is also true. Therefore the second fundamental from of $h_1$ vanishes at $(\tilde{t},\tilde{x})=(0,x)$, which proves that $(\tilde{t},\tilde{x})=(0,x)$ is an umbilic point of $h_1$. Similarly, for the case (ii), we can prove that $(\tilde{t},\tilde{x})=(x,0)$ is also an umbilic point of $h_2$ in the proof of Theorem \ref{thm:BI and M}. Therefore quasi-umbilic points do not appear on the center of symmetries.
\end{remark}

In Theorem \ref{thm:BI and M}, we saw that minimal surfaces in $\mathbb{E}^3$ with even (resp.\ odd) symmetry with respect to an axis correspond to timelike minimal surfaces in $\mathbb{L}^3$ with negative (resp.\ positive) Gaussian curvature. As pointed out in \cite{A2}, the diagonalizability of the shape operator of a timelike minimal surface corresponds to the sign of the Gaussian curvature. As a corollary of Theorem \ref{thm:BI and M}, we have a result about relations between symmetries and diagonalizability of the shape operator of timelike minimal surfaces.
\begin{corollary}\label{cor:symmetry1}
Away from flat points, a timelike minimal graph $y=h(t,x)$ with even (resp.\ odd) symmetry with respect to the $t$-axis (resp.\ $x$-axis) has real (resp.\ complex) principal curvatures near the axis.
\end{corollary}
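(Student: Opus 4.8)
The plan is to reduce Corollary~\ref{cor:symmetry1} to Theorem~\ref{thm:BI and M} by running the Wick rotation in reverse and invoking the cited fact that the diagonalizability of the shape operator of a timelike minimal surface is governed by the sign of its Gaussian curvature (see \cite{A2}). So the first step is to recall precisely this correspondence: away from flat points, a timelike minimal surface has real principal curvatures exactly where $K<0$ and non-real (complex conjugate) principal curvatures exactly where $K>0$; quasi-umbilic points occur where $K=0$ but the surface is not flat. Since the statement is about behavior \emph{near} the axis of symmetry, and $K$ is continuous, it suffices to determine the sign of $K$ \emph{on} the axis and use that a nonzero continuous function keeps its sign on a neighborhood.

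Next I would treat the even case. Let $y=h(t,x)$ be a timelike minimal graph with even symmetry with respect to the $t$-axis, i.e.\ even in $x$. By Proposition~\ref{prop:BI and M_even} (the converse direction), $z=f(x,y):=h(iy,x)$ is a real solution to \eqref{eq:M} in $\mathbb{E}^3$, and unwinding the substitution shows $f$ is even in $y$. Apply Theorem~\ref{thm:BI and M}(i) to this $f$: the resulting timelike minimal graph $\tilde y=f(\tilde x,i\tilde t)$ is exactly $h$ (up to the identification $\tilde t=y$, $\tilde x=x$, $\tilde y=z$), and it has negative Gaussian curvature near $\tilde t=0$, i.e.\ near the $t$-axis — provided the corresponding point is not an umbilic point of $f$ on $y=0$. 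Here "flat point" of $h$ must be matched with "umbilic point of $f$ on $y=0$": by the Remark following Theorem~\ref{thm:BI and M}, a point on the axis is a flat (umbilic) point of $h$ if and only if it corresponds to an umbilic point of $f$ on $y=0$, so away from flat points of $h$ the hypothesis of Theorem~\ref{thm:BI and M}(i) is met and $K_h<0$ near the axis. By the \cite{A2} correspondence, $h$ then has real principal curvatures near the $t$-axis.

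The odd case is entirely parallel: if $h$ is odd in $x$, then by Proposition~\ref{prop:BI and M_odd} (converse direction) $z=f(x,y):=-ih(x,iy)$ is a real solution to \eqref{eq:M} that is odd in $y$, and Theorem~\ref{thm:BI and M}(ii) gives that the associated timelike minimal graph — which is $h$ under the identification $\tilde t=x$, $\tilde x=y$, $\tilde y=z$ — has \emph{positive} Gaussian curvature near $\tilde x=0$, i.e.\ near the $x$-axis, again away from points corresponding to umbilic points of $f$ on $y=0$, equivalently away from flat points of $h$. Hence the shape operator of $h$ is non-diagonalizable over $\mathbb{C}$ near the $x$-axis, which is the assertion about complex principal curvatures.

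The main obstacle is bookkeeping rather than substance: one must check that the axis of symmetry of $h$ is carried to the locus $y=0$ under the inverse Wick rotation, that "even in $x$" for $h$ translates to "even in $y$" for $f$ (and similarly in the odd case), and — most delicately — that the notion of flat point of $h$ lines up with umbilic point of $f$ restricted to $y=0$, so that the exclusion "away from flat points" is exactly what licenses the use of Theorem~\ref{thm:BI and M}. This last matching is precisely the content of the Remark after Theorem~\ref{thm:BI and M}, so it can be quoted; once these identifications are in place the corollary is immediate from Theorem~\ref{thm:BI and M} together with the sign-of-$K$ versus diagonalizability dictionary of \cite{A2}.
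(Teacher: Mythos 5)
Your proposal follows exactly the paper's route: invert the Wick rotation via the converse halves of Propositions~\ref{prop:BI and M_even} and \ref{prop:BI and M_odd} to produce a minimal graph $f$ with the corresponding symmetry, apply Theorem~\ref{thm:BI and M} to $f$ (whose Wick rotation is the original $h$) to pin down the sign of the Gaussian curvature near the axis, and then translate sign of $K$ into diagonalizability of the shape operator using \cite{A2}. Your extra care in matching ``flat points of $h$'' with ``umbilic points of $f$ on $y=0$'' via the Remark after Theorem~\ref{thm:BI and M} is a point the paper's own proof passes over silently, and it is handled correctly.

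The one thing to fix is the gloss ``even symmetry with respect to the $t$-axis, i.e.\ even in $x$'' in the even case. Throughout the paper (see Lemma~\ref{lemma:symmetry} and the way Propositions~\ref{prop:BI and M_even}--\ref{prop:BI and M_odd} are applied), ``even/odd with respect to the $u$-axis'' means even/odd \emph{in the variable} $u$, since that is exactly the condition making the Wick rotation in $u$ real- or imaginary-valued. So the hypothesis in the even case is that $h$ is even in $t$ (its first argument), not in $x$. If $h$ were merely even in $x$, the function $h(iy,x)$ would in general be complex-valued and Proposition~\ref{prop:BI and M_even} would not apply; with the intended reading ``even in $t$'' your chain of implications (reality of $f=h(iy,x)$, evenness of $f$ in $y$, applicability of Theorem~\ref{thm:BI and M}(i)) goes through verbatim. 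Your odd case already uses the correct variable ($h$ odd in $x$, the rotated second argument), so only the even-case sentence needs correcting; no mathematical content changes.
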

\begin{proof}
By Lemma \ref{lemma:symmetry} and Proposition \ref{prop:BI and M_even} (resp.\ Proposition \ref{prop:BI and M_odd}), the Wick rotated solution $\tilde{z}=f(\tilde{x},\tilde{y})=h(i\tilde{y},\tilde{x})$ (resp.\ $\tilde{z}=f(\tilde{x},\tilde{y})=-ih(\tilde{x},i\tilde{y})$) is a solution to (\ref{eq:M}) with the even (resp.\ odd) symmetry with respect to the $\tilde{y}$-axis. By using Theorem \ref{thm:BI and M} for the minimal graph $f$ in $\mathbb{E}^3(\tilde{x},\tilde{y},\tilde{z})$, we conclude that the Wick rotated solution of $f$, which is nothing but the original solution $h$, is timelike and has negative (resp.\ positive) Gaussian curvature. Away from flat points the diagonalizability of the shape operator of a timelike minimal surface is determined by the sign of the Gaussian curvature $K$, see \cite{A2} for details. Since the shape operator is diagonalizable over $\mathbb{R}$ (resp.\ $\mathbb{C}\setminus \mathbb{R}$) on a point where $K$ is negative (resp.\ positive), we have the desired result.
\end{proof}

 \subsection{Geometric properties of transformations between solutions to (\ref{eq:ZMC}) and (\ref{eq:BI})}
A relation between solutions to (\ref{eq:ZMC}) and (\ref{eq:BI}) was pointed out by R.\ Dey and the second author in \cite[Proposition 2.1]{DS}. Similar to Propositions \ref{prop:BI and M_even} and \ref{prop:BI and M_odd}, we can prove the following proposition.
\begin{proposition}\label{prop:BI and ZMC}
The following correspondences hold.
\begin{itemize}
\item[(i)] For a solution $t=g(x,y)$ to (\ref{eq:ZMC}) with even symmetry to the $x$-axis, $\tilde{y}=g(i\tilde{x},\tilde{t})$ is a real solution to (\ref{eq:BI}) in $\mathbb{L}^3(\tilde{t},\tilde{x},\tilde{y})$. Conversely, for a solution $\tilde{y}=h(\tilde{t},\tilde{x})$ to (\ref{eq:BI}) in $\mathbb{L}^3(\tilde{t},\tilde{x},\tilde{y})$ with even symmetry with respect to the $\tilde{x}$-axis, $t=h(y,ix)$ is a real solution to (\ref{eq:ZMC}) in $\mathbb{L}^3(t,x,y)$, where $\tilde{t}=y$, $\tilde{x}=x$ and $\tilde{y}=t$.
\item[(ii)] For a solution $t=g(x,y)$ to (\ref{eq:ZMC}) with odd symmetry to the $x$-axis, $\tilde{y}=-ig(i\tilde{t},\tilde{x})$ is a real solution to (\ref{eq:BI}) in $\mathbb{L}^3(\tilde{t},\tilde{x},\tilde{y})$. Conversely, for a solution $\tilde{y}=h(\tilde{t},\tilde{x})$ to (\ref{eq:BI}) in $\mathbb{L}^3(\tilde{t},\tilde{x},\tilde{y})$ with odd symmetry to the $\tilde{t}$-axis, $t=-ih(ix,y)$ is a real solution to (\ref{eq:ZMC}) in $\mathbb{L}^3(t,x,y)$, where $\tilde{t}=x$, $\tilde{x}=y$ and $\tilde{y}=t$.
\end{itemize}
\end{proposition}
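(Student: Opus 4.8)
The plan is to imitate the proof of Proposition \ref{prop:BI and M_odd}, treating each of the four assertions as a direct computation together with an application of Lemma \ref{lemma:symmetry} and the identity theorem. The only genuinely new ingredient compared to the previous propositions is that we start from equation (\ref{eq:ZMC}) rather than (\ref{eq:M}), so I would first record the derivative substitutions under the relevant Wick rotation and then substitute into the Born--Infeld operator.

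\medskip

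\emph{Part (i), first half.} Assume $g=g(x,y)$ solves (\ref{eq:ZMC}) and is even in $x$. Set $h(\tilde t,\tilde x):=g(i\tilde x,\tilde t)$, where $\tilde t=y$, $\tilde x=x$. By Lemma \ref{lemma:symmetry} (applied with $x_1=x$, $x_2=y$), the evenness of $g$ in $x$ forces $h$ to be real valued. The chain rule gives
\[
h_{\tilde t}=g_y,\quad h_{\tilde x}=ig_x,\quad h_{\tilde t\tilde t}=g_{yy},\quad h_{\tilde t\tilde x}=ig_{xy},\quad h_{\tilde x\tilde x}=-g_{xx},
\]
all evaluated at $(i\tilde x,\tilde t)$. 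Substituting into the left side of (\ref{eq:BI}) and simplifying, one checks that
\[
\bigl[(1-h_{\tilde t}^2)h_{\tilde x\tilde x}+2h_{\tilde t}h_{\tilde x}h_{\tilde t\tilde x}-(1+h_{\tilde x}^2)h_{\tilde t\tilde t}\bigr](\tilde t,\tilde x)
=-\bigl[(1-g_y^2)g_{xx}+2g_xg_yg_{xy}+(1-g_x^2)g_{yy}\bigr](i\tilde x,\tilde t),
\]
so the right side vanishes by (\ref{eq:ZMC}) and the identity theorem, proving that $h$ solves (\ref{eq:BI}). The converse is the same computation run backwards: starting from a real solution $h$ of (\ref{eq:BI}) even in $\tilde x$, Lemma \ref{lemma:symmetry} makes $g(x,y):=h(y,ix)$ real, and the identity theorem transfers the vanishing of the Born--Infeld operator back to the vanishing of the zero mean curvature operator.

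\medskip

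\emph{Part (ii).} Here the extra factor $-i$ is needed because $g$ is odd in $x$, so by Lemma \ref{lemma:symmetry} the bare Wick rotation $g(i\tilde t,\tilde x)$ is imaginary valued and multiplying by $-i$ restores reality; note this is exactly the mechanism used in Proposition \ref{prop:BI and M_odd}. With $h(\tilde t,\tilde x):=-ig(i\tilde t,\tilde x)$ and $\tilde t=x$, $\tilde x=y$, the substitutions are
\[
h_{\tilde t}=g_x,\quad h_{\tilde x}=-ig_y,\quad h_{\tilde t\tilde t}=ig_{xx},\quad h_{\tilde t\tilde x}=g_{xy},\quad h_{\tilde x\tilde x}=-ig_{yy},
\]
evaluated at $(i\tilde t,\tilde x)$, and plugging these into the Born--Infeld operator produces $i$ times the zero mean curvature operator of $g$ at $(i\tilde t,\tilde x)$, which vanishes by (\ref{eq:ZMC}). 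The converse half is handled identically, using $g(x,y):=-ih(ix,y)$ and Lemma \ref{lemma:symmetry} to guarantee reality.

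\medskip

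I do not expect a conceptual obstacle: all four statements reduce to a bookkeeping check that the chain-rule factors of $i$ collect correctly so that the Born--Infeld operator becomes $\pm 1$ or $\pm i$ times the zero mean curvature operator after the change of variables. The one point requiring a little care is matching the sign patterns and the roles of the two variables: in (\ref{eq:ZMC}) it is the $x$-axis that must be an axis of symmetry (not the $y$-axis), because the Wick rotation acts on the first slot in case (i) and, after the coordinate identification, it is the first slot that corresponds to $\tilde x$; getting the labels $\tilde t,\tilde x,\tilde y$ consistent with the statement is the only place an error could slip in. As with Proposition \ref{prop:BI and M_odd}, it suffices to write out one direction of one item in full and remark that the remaining cases are entirely analogous.
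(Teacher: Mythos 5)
Your proposal is correct and follows essentially the same route the paper intends: the paper gives no separate proof of this proposition, stating only that it is proved "similar to Propositions \ref{prop:BI and M_even} and \ref{prop:BI and M_odd}," and your chain-rule bookkeeping plus Lemma \ref{lemma:symmetry} and the identity theorem is exactly that argument, with the derivative substitutions and the factor $-1$ in part (i) checking out. The only slip is harmless: in part (ii) the Born--Infeld operator equals $-i$ (not $+i$) times the zero mean curvature operator at $(i\tilde t,\tilde x)$, which of course still vanishes.
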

In Theorem \ref{thm:BI and M}, we saw that minimal graphs in $\mathbb{E}^3$ correspond to timelike minimal graphs over a domain of the timelike $tx$-plane in $\mathbb{L}^3(t,x,y)$. On the other hand, by Wick rotations between solutions to (\ref{eq:ZMC}) and (\ref{eq:BI}), causal characters of solutions change in general. First we consider Wick rotations near spacelike or timelike part of solutions (In Section \ref{Sec.4}, we will discuss Wick rotations near lightlike points). Based on the correspondences given in Proposition \ref{prop:BI and ZMC}, we prove the next theorem which relates the causal characters, symmetries and Gaussian curvatures of solutions. 

\begin{theorem}\label{thm:BI and ZMC}
Let $t=g(x,y)$ be a solution to (\ref{eq:ZMC}). The following statements hold.
\begin{itemize}
\item[(i)] If $g$ is even with respect to the $x$-axis, then the graph of $g$ is spacelike (resp.\ timelike) if and only if the solution $\tilde{y}=h(\tilde{t},\tilde{x})=g(i\tilde{x},\tilde{t})$ to (\ref{eq:BI}) in $\mathbb{L}^3(\tilde{t},\tilde{x},\tilde{y})$ is timelike (resp.\ spacelike), where $\tilde{t}=y$, $\tilde{x}=x$ and $\tilde{y}=t$. Moreover the Gaussian curvature of the graph of $g$ is positive (resp.\ negative), and that of the graph of $h$ is negative (resp.\ positive) away from umbilic points.
\item[(ii)]If $g$ is odd with respect to the $x$-axis, then the graph of $g$ is spacelike (resp.\ timelike) if and only if the solution $\tilde{y}=h(\tilde{t},\tilde{x})=-ig(i\tilde{t},\tilde{x})$ to (\ref{eq:BI}) in $\mathbb{L}^3(\tilde{t},\tilde{x},\tilde{y})$ is timelike (resp.\ spacelike), where $\tilde{t}=x$, $\tilde{x}=y$ and $\tilde{y}=t$. Moreover the Gaussian curvatures of both graphs  are positive away from umbilic points.
\end{itemize}
\end{theorem}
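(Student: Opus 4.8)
The plan is to mimic closely the proof of Theorem \ref{thm:BI and M}, tracking causal characters and Gaussian curvatures through the Wick rotation, but now starting from a solution to (\ref{eq:ZMC}) instead of (\ref{eq:M}). For part (i), set $h(\tilde t,\tilde x):=g(i\tilde x,\tilde t)$ with $\tilde t=y$, $\tilde x=x$, $\tilde y=t$; by Lemma \ref{lemma:symmetry} and Proposition \ref{prop:BI and ZMC}(i) this is a real solution to (\ref{eq:BI}). Compute the derivatives: $h_{\tilde t}=g_y$, $h_{\tilde x}=ig_x$, $h_{\tilde t\tilde t}=g_{yy}$, $h_{\tilde t\tilde x}=ig_{xy}$, $h_{\tilde x\tilde x}=-g_{xx}$, all evaluated at $(i\tilde x,\tilde t)$. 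Substituting into the first fundamental form $\mathrm I_h=(-1+h_{\tilde t}^2)d\tilde t^2+2h_{\tilde t}h_{\tilde x}d\tilde t d\tilde x+(1+h_{\tilde x}^2)d\tilde x^2$ gives, at $(\tilde t,\tilde x)=(y,x)$,
\[
\mathrm I_h=(-1+g_y^2)d\tilde t^2+2i g_xg_y\,d\tilde t d\tilde x+(1-g_x^2)d\tilde x^2,
\]
whose discriminant is $-\det=(-1+g_y^2)(1-g_x^2)-(ig_xg_y)^2=-(1-g_x^2-g_y^2)$. Hence the causal type of $h$ is determined by the sign of $-(1-g_x^2-g_y^2)$: $h$ is timelike exactly when $g$ is spacelike and spacelike exactly when $g$ is timelike, which is the causal statement in (i).

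For the Gaussian curvature in part (i), I would read off the unit normal $\nu_h$ from the formula in Section \ref{Sec.2} and compute the second fundamental form $\mathrm{II}_h$ term by term, exactly as in the proof of Theorem \ref{thm:BI and M}. The numerator $h_{\tilde t\tilde t}h_{\tilde x\tilde x}-h_{\tilde t\tilde x}^2$ becomes $g_{yy}(-g_{xx})-(ig_{xy})^2=-(g_{xx}g_{yy}-g_{xy}^2)$, so up to the (positive) normalization $W_h^4$ one gets
\[
K_h(\tilde t,\tilde x)=-\,\frac{h_{\tilde t\tilde t}h_{\tilde x\tilde x}-h_{\tilde t\tilde x}^2}{W_h^4}=\Bigl(\frac{g_{xx}g_{yy}-g_{xy}^2}{W_h^4}\Bigr)(i\tilde x,\tilde t),
\]
which has sign opposite to $K_g=-(g_{xx}g_{yy}-g_{xy}^2)/W_g^4$. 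Since $K_g>0$ when $g$ is spacelike maximal away from umbilics and $K_g<0$ when $g$ is timelike minimal away from umbilics (and quasi-umbilics), this yields the stated sign behaviour: $K_h<0$ when $g$ is spacelike, $K_h>0$ when $g$ is timelike.

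For part (ii) the argument is the same with the substitution $h(\tilde t,\tilde x):=-ig(i\tilde t,\tilde x)$, $\tilde t=x$, $\tilde x=y$, $\tilde y=t$; Proposition \ref{prop:BI and ZMC}(ii) guarantees reality. Here $h_{\tilde t}=g_x$, $h_{\tilde x}=-ig_y$, $h_{\tilde t\tilde t}=g_{xx}$, $h_{\tilde t\tilde x}=-ig_{xy}$, $h_{\tilde x\tilde x}=-ig_{yy}$ — so one power of $-i$ survives in the pure-$\tilde x$ second derivative, and I expect the first fundamental form discriminant again to equal $-(1-g_x^2-g_y^2)$ up to sign, giving the causal flip, while the Gaussian curvature numerator $g_{xx}(-ig_{yy})-(-ig_{xy})^2=-i(g_{xx}g_{yy}-g_{xy}^2)$ together with the factor picked up from $\tilde y=-ig$ produces $K_h$ of the same sign as $-K_g$ — but since in case (ii) the relevant minimal surface in $\mathbb E^3$ (via Proposition \ref{prop:BI and ZMC}(ii) composed with the earlier correspondences) forces $K_g>0$ in both the spacelike and timelike regimes, one concludes $K_h>0$ in both cases. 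The main obstacle is bookkeeping: one must be scrupulous about which powers of $i$ appear in each mixed and pure second derivative, about the sign of $\varepsilon_h$ and hence of $W_h^2=-\varepsilon_h(1-h_{\tilde t}^2+h_{\tilde x}^2)$ (this is exactly the quantity whose sign changes across the causal flip, so it must be handled with the absolute value as in the definition of $\nu_h$), and about invoking the identity theorem so that the algebraic identities derived formally in $x,y$ persist after the complex substitution. No genuinely new idea beyond Theorem \ref{thm:BI and M} is needed; the content is verifying that the sign ledger closes.
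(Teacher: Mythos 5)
Your part (i) causal-character computation coincides with the paper's proof (the paper proves only that assertion explicitly and defers the rest to the proof of Theorem \ref{thm:BI and M}), and your relation that $K_h$ and $K_g$ have opposite signs in case (i) is correct. But the way you close the sign ledger has a genuine gap: you invoke ``$K_g<0$ when $g$ is timelike minimal away from umbilics (and quasi-umbilics)'' as if it were a general fact about timelike minimal surfaces. It is not: away from umbilic and quasi-umbilic points the Gaussian curvature of a timelike minimal surface can have either sign (this is exactly the dichotomy behind Corollaries \ref{cor:symmetry1} and \ref{cor:symmetry2}, and the graphs produced in Theorem \ref{thm:BI and M}(ii) and in part (ii) of the present theorem are timelike with $K>0$). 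Using the symmetry-dependent version of that claim here would be circular, since Corollary \ref{cor:symmetry2} is deduced from this theorem. The correct deduction is: by the causal flip exactly one of $g$, $h$ is spacelike near the axis; that one is a maximal graph, whose trace-free symmetric shape operator forces positive Gaussian curvature away from umbilic points; the sign relation then transfers the opposite sign (case (i)) or the same sign (case (ii)) to the other graph.

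In part (ii) the bookkeeping you flagged as the main obstacle does in fact go wrong. For $h(\tilde t,\tilde x)=-ig(i\tilde t,\tilde x)$ one has $h_{\tilde t\tilde t}=ig_{xx}$ and $h_{\tilde t\tilde x}=g_{xy}$ (not $g_{xx}$ and $-ig_{xy}$), so the curvature numerator is $h_{\tilde t\tilde t}h_{\tilde x\tilde x}-h_{\tilde t\tilde x}^2=(ig_{xx})(-ig_{yy})-g_{xy}^2=g_{xx}g_{yy}-g_{xy}^2$; hence $K_h$ has the \emph{same} sign as $K_g$ in case (ii), not the opposite sign, and there is no extra factor ``picked up from $\tilde y=-ig$'' since the $-i$ is already contained in the derivatives of $h$. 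Your stated conclusion ($K_g>0$ and $K_h>0$) is the right one, but it does not follow from the relation you wrote (opposite signs together with $K_g>0$ would give $K_h<0$), and the appeal to ``the relevant minimal surface in $\mathbb{E}^3$'' forcing $K_g>0$ in the timelike regime is unsubstantiated. The determinant of the first fundamental form still comes out as $-(1-g_x^2-g_y^2)$ with the corrected derivatives, so the causal statement in (ii) survives; the curvature statement needs the corrected derivatives plus the maximal-surface argument above. Finally, note that the pointwise comparison only literally makes sense on the symmetry axis ($\tilde x=0$ in (i), $\tilde t=0$ in (ii)), where $i\tilde x$ or $i\tilde t$ is real; the conclusions then extend to a neighborhood by continuity, as the paper's restriction to $(\tilde t,0)$ and $(0,y)$ indicates.
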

\begin{proof}
We prove only the first assertion of (i). The rest of the proof is the same as that of Theorem \ref{thm:BI and M}. 
Since the first fundamental form of the graph of $h$ is
\[
(-1+g_y^2(i\tilde{x},\tilde{t}))d\tilde{t}^2+2ig_x(i\tilde{x},\tilde{t})g_y(i\tilde{x},\tilde{t})d\tilde{t}d\tilde{x}+(1-g_x^2(i\tilde{x},\tilde{t}))d\tilde{x}^2,
\]
the causal character of the graph of $h$ at $(\tilde{t},\tilde{x})$ is determined by its determinant, that is, the sign of $-1+g_x^2(i\tilde{x},\tilde{t})+g_y^2(i\tilde{x},\tilde{t})$.
On the other hand, the causal character of the graph of $g$ at $(x,y)$ is determined by the sign of $1-g_x^2(x,y)-g_y^2(x,y)$, hence the graph of $h$ is spacelike (resp. timelike) at $(\tilde{t},\tilde{x})=(\tilde{t},0)$ if and only if the graph of $g$ is timelike (resp.\ spacelike) at $(x,y)=(0,y)$.
\end{proof}

In addition to Corollary \ref{cor:symmetry1}, we complete the description of relations between symmetries of a timelike minimal graph and the diagonalizability of the shape operator as follows.
\begin{corollary}\label{cor:symmetry2}
The following statements hold.
\begin{itemize}
 \item[(i)]Away from flat points, a timelike minimal graph $y=h(t,x)$ with even (resp.\ odd) symmetry with respect to the $x$-axis (resp.\ $t$-axis) has real (resp.\ complex) principal curvatures near the axis.
\item[(ii)]Away from flat points, a timelike minimal graph $t=g(x,y)$ with even (resp.\ odd) symmetry with respect to the $x$-axis has real (resp.\ complex) principal curvatures  near the axis.
\end{itemize}
\end{corollary}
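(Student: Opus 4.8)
The plan is to imitate the proof of Corollary \ref{cor:symmetry1}, with the correspondence between solutions to (\ref{eq:M}) and (\ref{eq:BI}) replaced by the correspondence between solutions to (\ref{eq:ZMC}) and (\ref{eq:BI}) recorded in Proposition \ref{prop:BI and ZMC} and Theorem \ref{thm:BI and ZMC}. In each case the assertion about principal curvatures will be reduced to a statement about the sign of the Gaussian curvature near the symmetry axis, and then one invokes the fact recalled before Corollary \ref{cor:symmetry1} (see \cite{A2}): away from flat points, the shape operator of a timelike minimal surface is diagonalizable over $\mathbb{R}$ (resp.\ over $\mathbb{C}\setminus\mathbb{R}$) precisely where $K<0$ (resp.\ where $K>0$).

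Statement (ii) is the direct one. Given a timelike minimal graph $t=g(x,y)$ which is even (resp.\ odd) with respect to the $x$-axis, I would apply Theorem \ref{thm:BI and ZMC}(i) (resp.\ Theorem \ref{thm:BI and ZMC}(ii)) to $g$ itself. In the even case, since the graph of $g$ is timelike, Theorem \ref{thm:BI and ZMC}(i) gives that the Gaussian curvature of the graph of $g$ is negative away from umbilic points, hence negative on a neighborhood of the $x$-axis away from flat points, so the principal curvatures are real there; in the odd case, Theorem \ref{thm:BI and ZMC}(ii) gives that the Gaussian curvature of the graph of $g$ is positive away from umbilic points, so the principal curvatures are complex near the axis. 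No Wick rotation is performed here; one only reads off the curvature information already contained in Theorem \ref{thm:BI and ZMC}.

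For statement (i) I would first pass to an auxiliary solution of (\ref{eq:ZMC}) by a Wick rotation as in Proposition \ref{prop:BI and ZMC}. Let $y=h(t,x)$ be a timelike minimal graph. If $h$ is even with respect to the $x$-axis, the converse part of Proposition \ref{prop:BI and ZMC}(i) shows that $g(x,y):=h(y,ix)$ is a real solution to (\ref{eq:ZMC}) which, by Lemma \ref{lemma:symmetry}, is again even with respect to the $x$-axis; if $h$ is odd with respect to the $t$-axis, the converse part of Proposition \ref{prop:BI and ZMC}(ii) shows that $g(x,y):=-ih(ix,y)$ is a real solution to (\ref{eq:ZMC}), odd with respect to the $x$-axis. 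In either case, a short substitution check shows that the Born-Infeld solution attached to $g$ by the forward construction in Theorem \ref{thm:BI and ZMC}, namely $g(i\tilde{x},\tilde{t})$ in the even case and $-ig(i\tilde{t},\tilde{x})$ in the odd case, is precisely the original $h$. Applying Theorem \ref{thm:BI and ZMC} to $g$ --- and using that $h$, hence this attached solution, is timelike, so that one is in the branch ``graph of $g$ spacelike'' --- then gives $K_h<0$ near the axis in the even case and $K_h>0$ near the axis in the odd case, away from flat points. By the diagonalizability criterion quoted above, this yields real (resp.\ complex) principal curvatures for $h$ near the axis.

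I do not expect a serious obstacle, since all the analytic input is already contained in Lemma \ref{lemma:symmetry}, Proposition \ref{prop:BI and ZMC} and Theorem \ref{thm:BI and ZMC}. The only point requiring care is the substitution check in part (i): verifying that feeding the auxiliary $g$ into the forward construction of Theorem \ref{thm:BI and ZMC} returns exactly the given $h$ --- so that the Gaussian curvature conclusion of that theorem for its own ``$h$'' is a statement about our $h$ --- and that the parity of $g$ produced by Proposition \ref{prop:BI and ZMC} matches the hypothesis of the branch of Theorem \ref{thm:BI and ZMC} being applied. Both follow at once from the power series description in Lemma \ref{lemma:symmetry} and the substitution rules in Proposition \ref{prop:BI and ZMC}.
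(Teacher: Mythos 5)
Your argument is correct and follows the paper's own (very terse) proof: in both parts you reduce to the sign of the Gaussian curvature near the axis via Theorem \ref{thm:BI and ZMC}, using Lemma \ref{lemma:symmetry} and the converse parts of Proposition \ref{prop:BI and ZMC} to transfer the symmetry, and then invoke the diagonalizability criterion from \cite{A2}. The substitution check you flag (that the forward construction applied to the auxiliary $g$ returns the original $h$, and that $g$ inherits the right parity) does go through exactly as you describe, so there is no gap.
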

\begin{proof}
By Lemma \ref{lemma:symmetry}, the Wick rotation of $h$ or $g$ with respect to the considered axis is also even (or odd) and by using Theorem \ref{thm:BI and ZMC}, we have the desired result.
\end{proof}

\subsection{Geometric properties of transformations between solutions to (\ref{eq:M}) and (\ref{eq:ZMC})}
As we saw in Introduction, we can construct a solution to (\ref{eq:ZMC}) by taking Wick rotations with respect to two variables of a solution to (\ref{eq:M}) when the solution has symmetries. The following theorem is an immediate consequence of Theorems \ref{thm:BI and M} and \ref{thm:BI and ZMC}.
\begin{theorem}\label{thm:M and ZMC}
Let $z=f(x,y)$ be a solution to (\ref{eq:M}) in $\mathbb{E}^3(x,y,z)$. If $f$ is even (resp.\ odd) with respect to the $x$ and $y$ axes, then its Wick rotation $t=g(x,y)=f(ix,iy)$  (resp.\ $t=g(x,y)=-f(ix,iy)$) is a real solution to (\ref{eq:ZMC}) in $\mathbb{L}^3(t,x,y)$ which is spacelike at least near the origin $o=(0,0)$.

\end{theorem}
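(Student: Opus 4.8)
The plan is to realize $g$ as the composition of two of the Wick rotations already analyzed, routing through the Born--Infeld equation, and then to read off the causal character at the origin from the vanishing of the first derivatives of $g$ there. \emph{Even case.} Assume $f$ is even in both $x$ and $y$. Applying Proposition~\ref{prop:BI and M_even} to the evenness of $f$ in $y$ produces a real solution $h(\tilde t,\tilde x):=f(\tilde x,i\tilde t)$ to (\ref{eq:BI}) in $\mathbb{L}^3(\tilde t,\tilde x,\tilde y)$ with $\tilde t=y$, $\tilde x=x$, $\tilde y=z$. The point is that $h$ inherits the \emph{other} symmetry of $f$: since $f(-\tilde x,i\tilde t)=f(\tilde x,i\tilde t)$, the function $h$ is even with respect to the $\tilde x$-axis, so the converse part of Proposition~\ref{prop:BI and ZMC}(i) applies and yields a real solution $g(x,y):=h(y,ix)$ to (\ref{eq:ZMC}) in $\mathbb{L}^3(t,x,y)$; unwinding the definitions gives $g(x,y)=f(ix,iy)$. \emph{Odd case.} This is parallel, using Proposition~\ref{prop:BI and M_odd} in place of Proposition~\ref{prop:BI and M_even}: oddness of $f$ in $y$ gives the real Born--Infeld solution $h(\tilde t,\tilde x):=-if(\tilde t,i\tilde x)$, which is odd with respect to the $\tilde t$-axis because $f$ is odd in $x$; the converse of Proposition~\ref{prop:BI and ZMC}(ii) then produces the real solution $g(x,y):=-ih(ix,y)=-f(ix,iy)$ to (\ref{eq:ZMC}). (Reality of $g$ can equally well be deduced directly from Lemma~\ref{lemma:symmetry} applied twice, or from the power series of $f$.)

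It remains to check that the graph of $g$ is spacelike near $o=(0,0)$. The first fundamental form of $X(x,y)=(g(x,y),x,y)$ has determinant $1-g_x^2-g_y^2$, so it suffices to evaluate this at $o$. Symmetry forces $f_x(o)=f_y(o)=0$: in the even case because the partial derivative of a function even in a given variable vanishes on the corresponding coordinate axis, and in the odd case because $f$ vanishes identically on both coordinate axes. Hence $g_x(o)=\pm i f_x(o)=0$ and likewise $g_y(o)=0$, so the first fundamental form of the graph of $g$ at $o$ is $dx^2+dy^2$, which is positive definite, and by continuity (real analyticity) the graph is spacelike on a neighborhood of $o$. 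Alternatively, one can argue through the cited theorems: the intermediate surface $h$ is timelike along its axis, since there its first fundamental form has determinant $-(1+f_x^2+f_y^2)<0$ exactly as in the proof of Theorem~\ref{thm:BI and M}, and then Theorem~\ref{thm:BI and ZMC} converts ``timelike along the axis'' into ``spacelike along the axis'', hence spacelike near $o$.

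The main obstacle is purely bookkeeping: tracking which variable is Wick-rotated at each of the two stages, the successive identifications of the coordinates of $\mathbb{E}^3$ with those of $\mathbb{L}^3$, and verifying that the intermediate Born--Infeld solution $h$ genuinely inherits the even (resp.\ odd) symmetry needed to apply Proposition~\ref{prop:BI and ZMC}. One should also confirm that the composite of the two Wick rotations coincides with the map appearing implicitly in Theorems~\ref{thm:BI and M}--\ref{thm:BI and ZMC} after their coordinate relabelings; this holds because the reflections $x\mapsto -x$ and $y\mapsto -y$ generated along the way are absorbed by the assumed symmetries of $f$. There is no analytic difficulty beyond Lemma~\ref{lemma:symmetry} and the propositions already established.
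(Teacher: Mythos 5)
Your proposal is correct and follows essentially the same route as the paper, which derives this theorem as an immediate consequence of Theorems~\ref{thm:BI and M} and \ref{thm:BI and ZMC} by composing the two Wick rotations through the Born--Infeld equation; your bookkeeping of the inherited symmetry of the intermediate solution $h$ and the direct check that $g_x(o)=g_y(o)=0$ simply make explicit what the paper leaves to the reader.
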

Conversely, if we take Wick rotations of a solution to (\ref{eq:ZMC}) which is even (or odd) with respect to two variables and spacelike at least near the origin, we obtain a real solution to (\ref{eq:M}).

\section{Transformations of zero mean curvature surfaces with a lightlike line}\label{Sec.4}
In Proposition \ref{prop:BI and ZMC} and Theorem \ref{thm:BI and ZMC}, we saw how solutions to (\ref{eq:ZMC}) and (\ref{eq:BI}) are transformed to each other near spacelike and timelike points. In this section, we study Wick rotations starting from lightlike points. As an application, we give a transformation theory of zero mean curvature surfaces which contain a lightlike line.

\subsection{Wick rotations starting from lightlike points}
In the following, we are concerned with Wick rotations starting from lightlike points. For a solution $g$ to (\ref{eq:ZMC}), we define the function $B_g=1-g_x^2-g_y^2$ and its gradient $\nabla{B_g}=((B_g)_x,(B_g)_y)$. In \cite{Klyachin}, Klyachin showed the following property of lightlike points on zero mean curvature surfaces in $\mathbb{L}^3$:
\begin{theorem}[\cite{Klyachin} and cf. also \cite{UY}]\label{thm:Klyachin}
Let $g$ be a solution to (\ref{eq:ZMC}) and $o=(0,0)$ be a lightlike point in the domain of $g$. Then either of the following holds.
\begin{itemize}
\item [(i)] For the case that $\nabla{B_g}(o)\neq0$, the image of lightlike points near $o$ is a non-degenerate null curve $\gamma$ in $\mathbb{L}^3$ across which the causal character of the graph is changed, where a non-degenerate null curve is a regular curve in $\mathbb{L}^3$ whose velocity vector field is lightlike and linearly independent to its acceleration vector field.
\item[(ii)] For the case that $\nabla{B_g}(o)=0$, the image of lightlike points near $o$ contains a lightlike line segment in $\mathbb{L}^3$ passing through $(g(o),o)$.
\end{itemize}
\end{theorem}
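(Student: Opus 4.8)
The plan is to extract everything from the way equation (\ref{eq:ZMC}) degenerates along the lightlike set. First I would record the \emph{degeneracy identity}: wherever $B_g=0$ one has $1-g_x^2=g_y^2$ and $1-g_y^2=g_x^2$, so (\ref{eq:ZMC}) reduces to
\[
g_x^2g_{xx}+2g_xg_yg_{xy}+g_y^2g_{yy}=0,
\]
and since $\nabla B_g=-2(g_xg_{xx}+g_yg_{xy},\,g_xg_{xy}+g_yg_{yy})$ this says precisely that $\nabla g\cdot\nabla B_g=0$ on $\{B_g=0\}$, where moreover $|\nabla g|=1$. This identity powers case (i), and a higher-order version of it powers case (ii).

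For (i), with $\nabla B_g(o)\neq0$: by the implicit function theorem $\{B_g=0\}$ is near $o$ a regular real analytic plane curve, which I parametrize as $c(s)=(x(s),y(s))$. Since $c'(s)$ and $\nabla g(c(s))$ are both orthogonal to the nonzero vector $\nabla B_g(c(s))$, I can write $c'=\lambda\nabla g$ with $\lambda$ nowhere zero; then the image curve $\gamma(s)=(g(c(s)),x(s),y(s))$ has $\gamma'=\lambda\,(1,g_x,g_y)|_{c}$, whose Lorentzian square is $\lambda^2(-1+g_x^2+g_y^2)=-\lambda^2B_g=0$, so $\gamma$ is a regular null curve. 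For non-degeneracy I would set $n(s)=(1,g_x,g_y)|_{c(s)}$ and compute $n'=-\tfrac{\lambda}{2}(0,\nabla B_g)|_{c}$; since $n$ has first component $1$ while $n'$ has first component $0$, $n'$ is a multiple of $n$ only when $n'=0$, i.e.\ only where $\nabla B_g$ vanishes, which is excluded near $o$; hence $\gamma''=\lambda'n+\lambda n'$ is independent of $\gamma'=\lambda n$. Finally, $\nabla B_g(o)\neq0$ makes $B_g$ change sign across $\{B_g=0\}$, and since $\det\mathrm{I}_g=B_g$ this is exactly the passage from spacelike to timelike across $\gamma$.

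For (ii), with $\nabla B_g(o)=0$: after a rotation of the $xy$-plane — an isometry of $\mathbb{L}^3$ fixing the $t$-axis, under which (\ref{eq:ZMC}) and the hypotheses are invariant — I may assume $\nabla g(o)=(1,0)$, so that the candidate lightlike line is the image of the $x$-axis, $s\mapsto(g(o)+s,s,0)$, which has null direction $(1,1,0)$. This amounts to proving $B_g(x,0)\equiv0$ near $x=0$, i.e.\ $g_x(x,0)\equiv1$ and $g_y(x,0)\equiv0$, which by real analyticity reduces to showing $\partial_x^kg(o)=0$ for all $k\geq2$ and $\partial_x^kg_y(o)=0$ for all $k\geq0$. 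The hypotheses give the smallest cases directly: $g_y(o)=0$, and $\nabla B_g(o)=0$ together with $\nabla g(o)=(1,0)$ forces $g_{xx}(o)=g_{xy}(o)=0$. For the rest I would run an induction organized by total differentiation order, feeding the vanishings already obtained into the two equations got by applying $\partial_x^m$ and $\partial_x^m\partial_y$ to (\ref{eq:ZMC}) and evaluating at $o$. In each of them exactly one new unknown — $\partial_x^{m+2}g(o)$, respectively $\partial_x^{m+2}g_y(o)$ — survives, with coefficient $(1-g_y^2)(o)=1$, because every remaining term of the Leibniz expansion carries either a lower-order factor $\partial_x^ag_y(o)$ or $\partial_x^bg(o)$ with $b\geq2$, already known to vanish, or else a factor built from $1-g_x^2$, which together with all of its $x$-derivatives vanishes at $o$; it is this last observation that kills the otherwise uncontrolled terms involving $g_{yy}$ and $g_{yyy}$.

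I expect the induction in (ii) to be the only genuinely delicate point: one must arrange the multinomial expansions of the $x$- and $xy$-differentiated forms of (\ref{eq:ZMC}) carefully enough to see that precisely one top-order derivative appears unencumbered while every other term is manifestly a product containing an already-vanishing factor, and one must fix the order in which the two families of derivatives are established so that no circularity creeps in. Part (i), by contrast, should be short and essentially computational once the degeneracy identity $\nabla g\perp\nabla B_g$ on $\{B_g=0\}$ has been recorded.
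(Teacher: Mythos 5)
The paper offers no proof of this statement; it is quoted from \cite{Klyachin} (cf.\ also \cite{UY}) as a known result, so there is no in-house argument to compare yours against and your proposal has to be judged on its own. Judged so, it is correct. In (i), the degeneracy identity $\nabla g\cdot\nabla B_g=0$ on $\{B_g=0\}$ together with $|\nabla g|=1$ there does force $c'=\lambda\nabla g$ with $\lambda$ nowhere zero, your formula $n'=-\tfrac{\lambda}{2}\,(0,\nabla B_g)$ is right, and the first-component argument gives linear independence of $n$ and $n'$, hence of $\gamma'$ and $\gamma''$; since $\det\mathrm{I}_g=B_g$ changes sign across a regular level set, the causal character changes across $\gamma$. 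In (ii), one small imprecision: ``$B_g(x,0)\equiv 0$, i.e.\ $g_x(x,0)\equiv1$ and $g_y(x,0)\equiv0$'' is not an equivalence (the condition $g_x^2+g_y^2=1$ alone does not pin down the gradient), but you in fact prove the stronger statement, so nothing is lost. The induction you sketch does close, with the ordering: assume $\partial_x^ag(o)=0$ for $2\le a\le N$ and $\partial_x^bg_y(o)=0$ for $0\le b\le N-1$; then applying $\partial_x^{N-1}$ to (\ref{eq:ZMC}) and evaluating at $o$ isolates $\partial_x^{N+1}g(o)$ with coefficient $(1-g_y^2)(o)=1$, and applying $\partial_x^{N-2}\partial_y$ isolates $\partial_x^{N}g_y(o)$. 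The uncontrolled $g_{yy}$- and $g_{yyy}$-terms are indeed all harmless: each is paired either with some $\partial_x^j(1-g_x^2)(o)$, which vanishes for every $j\le N-1$ because each term of its Leibniz expansion contains a factor $\partial_x^{a}g(o)$ with $2\le a\le j+1$, or with a factor $\partial_x^bg_y(o)$ of order $b\le N-1$. So the only thing separating your proposal from a complete proof is writing out that Leibniz bookkeeping explicitly; no idea is missing.
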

The first case is now well understood. In fact, for the null curve $\gamma$ in (i) of Theorem \ref{thm:Klyachin}, the spacelike part $\Phi$ and the timelike part $\Psi$ of the graph of $g$ can be written as 
\begin{equation}\label{eq:ZMCext}
  \Phi(u,v)=\frac{\gamma(u+iv)+\gamma(u-iv)}{2}\quad \text{and}\quad 
  \Psi(u,v)=\frac{\gamma(u+v)+\gamma(u-v)}{2},
\end{equation}
respectively, and the images of $\Phi$ and $\Psi$ match real analytically along $\gamma$. See \cite{FujimoriETAL3,Gu,KKSY,Klyachin} for details.
Moreover, as noted in \cite[Section 2]{KKSY}, these two parts are also related by the Wick rotations
\[
\Phi(u,iv)=\Psi(u,v)\quad \text{and}\quad \Psi(u,iv)=\Phi(u,v).
\]

On the other hand, the second case has been studied intensively in recent years \cite{A1,FujimoriETAL1,FujimoriETAL2,FujimoriETAL3,UY}. In this section, we give a transformation theory for zero mean curvature surfaces with lightlike points satisfying the condition (ii) in Theorem \ref{thm:Klyachin} via Wick rotations. First we show that lightlike lines are transformed to each other under Wick rotations on lightlike points.

\begin{lemma}Let $t=g(x,y)$ be a solution to (\ref{eq:ZMC}) with $g(o)=0$, and $o=(0,0)$ be a lightlike point satisfying $\nabla{B_g}(o)=0$. The following statements hold.
\begin{itemize}\label{Lemma:lightlikelines}
\item[(i)] If $g$ is even with respect to the $x$-axis, then there exists a lightlike line segment $L$, which lies in either $\{(y,0,y)\mid y\in \mathbb{R}\}$ or $\{(-y,0,y)\mid y\in \mathbb{R}\}$, and the Wick rotation $\tilde{y}=h(\tilde{t},\tilde{x})=g(i\tilde{x},\tilde{t})$ in (i) of Proposition \ref{prop:BI and ZMC} also has a lightlike line segment $\tilde{L}$, which lies in either $\{(\tilde{y},0,\tilde{y})\mid \tilde{y}\in \mathbb{R}\}$ or $\{(-\tilde{y},0,\tilde{y})\mid \tilde{y}\in \mathbb{R}\}$.
\item[(ii)] If $g$ is odd with respect to the $x$-axis, then there exists a lightlike line segment $L$, which lies in either $\{(x,x,0)\mid x\in \mathbb{R}\}$ or $\{(-x,x,0)\mid x\in \mathbb{R}\}$, and the Wick rotation $\tilde{y}=h(\tilde{t},\tilde{x})=-ig(i\tilde{t},\tilde{x})$ in (ii) of Proposition \ref{prop:BI and ZMC} also has a lightlike line segment $\tilde{L}$, which lies in either $\{(\tilde{y},0,\tilde{y})\mid \tilde{y}\in \mathbb{R}\}$ or $\{(-\tilde{y},0,\tilde{y})\mid \tilde{y}\in \mathbb{R}\}$.
\end{itemize}
\end{lemma}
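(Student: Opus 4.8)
The plan is to combine Klyachin's dichotomy (Theorem~\ref{thm:Klyachin}) with an elementary computation of the tangent plane of the graph at $o$, and then to transport the conclusion through the Wick rotation formulas of Proposition~\ref{prop:BI and ZMC} using real analyticity; I would run cases (i) and (ii) in parallel. The common first step is to read off the first derivatives of $g$ at $o$ from its symmetry: if $g$ is even in $x$ then $g_x(o)=0$, and if $g$ is odd in $x$ then $g(0,y)\equiv 0$, hence $g_y(o)=0$. Since $o$ is a lightlike point, $1-g_x(o)^2-g_y(o)^2=0$, so $g_y(o)=\varepsilon\in\{1,-1\}$ in case (i) and $g_x(o)=\varepsilon\in\{1,-1\}$ in case (ii).

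Because $\nabla B_g(o)=0$ by hypothesis, Theorem~\ref{thm:Klyachin}(ii) yields a lightlike line segment $L$, contained in the image of the lightlike points near $o$ and passing through $X(o)=(g(o),o)=(0,0,0)$, where $X(x,y)=(g(x,y),x,y)$. To pin down $L$, note that $L$ lies in the image of $X$ and passes through $X(o)$, so its direction lies in the plane $T=dX_o(\mathbb{R}^2)=\mathrm{span}\{X_x(o),X_y(o)\}$; and since the first fundamental form of $X$ degenerates at $o$, the induced metric on $T$ is degenerate, so $T$ contains exactly one null line, which the direction of $L$ must equal. Computing it from $X_x(o)=(0,1,0)$, $X_y(o)=(\varepsilon,0,1)$ in case (i) gives the direction $(\varepsilon,0,1)$, and from $X_x(o)=(\varepsilon,1,0)$, $X_y(o)=(0,0,1)$ in case (ii) gives $(\varepsilon,1,0)$. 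Since $L$ passes through the origin, $L$ is thus a segment of $\{s(\varepsilon,0,1)\mid s\in\mathbb{R}\}$, respectively $\{s(\varepsilon,1,0)\mid s\in\mathbb{R}\}$, which is $\{(y,0,y)\mid y\in\mathbb{R}\}$ or $\{(-y,0,y)\mid y\in\mathbb{R}\}$ according as $\varepsilon=1$ or $\varepsilon=-1$ in case (i), and $\{(x,x,0)\mid x\in\mathbb{R}\}$ or $\{(-x,x,0)\mid x\in\mathbb{R}\}$ in case (ii). Pulling this back through $X$, I record for later use that $L$ is the image of $\{0\}\times I$ with $g(0,y)=\varepsilon y$ on an open interval $I\ni 0$ in case (i), and the image of $I\times\{0\}$ with $g(x,0)=\varepsilon x$ in case (ii), and that $B_g=1-g_x^2-g_y^2$ vanishes identically along these segments.

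For the Wick rotation, recall from Proposition~\ref{prop:BI and ZMC} that $\tilde y=h(\tilde t,\tilde x)$ is a real solution to (\ref{eq:BI}). A short computation of its first fundamental form shows that it degenerates exactly where $1-g_x^2-g_y^2$ vanishes at the Wick-rotated argument: at $(i\tilde x,\tilde t)$ in case (i) and at $(i\tilde t,\tilde x)$ in case (ii). Restricting to the line $\tilde x=0$ in the domain, in case (i) one has directly $h(\tilde t,0)=g(0,\tilde t)=\varepsilon\tilde t$ and $(1-g_x^2-g_y^2)(0,\tilde t)=0$ for $\tilde t\in I$, while in case (ii) one first extends $g(x,0)=\varepsilon x$ and $(1-g_x^2-g_y^2)(x,0)=0$ to complex $x$ near $I$ by the identity theorem, whence $h(\tilde t,0)=-ig(i\tilde t,0)=\varepsilon\tilde t$ and the defining quantity again vanishes along $\tilde x=0$. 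Hence the line $\tilde x=0$ consists of lightlike points of $h$, and its image in $\mathbb{L}^3(\tilde t,\tilde x,\tilde y)$ is the curve $\tilde t\mapsto(\tilde t,0,\varepsilon\tilde t)$, a genuine lightlike line segment $\tilde L$ (its velocity $(1,0,\varepsilon)$ is nonzero and null) lying in $\{(\tilde y,0,\tilde y)\mid\tilde y\in\mathbb{R}\}$ if $\varepsilon=1$ and in $\{(-\tilde y,0,\tilde y)\mid\tilde y\in\mathbb{R}\}$ if $\varepsilon=-1$. (Alternatively, one could check that $h$ again satisfies the hypotheses of Theorem~\ref{thm:Klyachin}(ii) at the origin, apply it to $h$, and locate $\tilde L$ by the same tangent-plane argument.)

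The step I expect to demand the most care is an issue of applicability rather than of computation: Theorem~\ref{thm:Klyachin} is stated for solutions to (\ref{eq:ZMC}), whereas the graph of $h$ is a graph over a timelike plane solving (\ref{eq:BI}), so one must invoke the fact that a zero mean curvature surface in $\mathbb{L}^3$ may be written, near a lightlike point, as a graph over a spacelike or a timelike plane, and that Klyachin's result is really a property of the surface and therefore passes to either description. The only other delicate point is the sign bookkeeping: one has to keep track of $\varepsilon$ in order to decide which of the two admissible null lines $L$ and $\tilde L$ actually lie in. Everything else is routine differentiation.
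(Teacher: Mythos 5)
Your argument is correct and follows essentially the same route as the paper: use the parity of $g$ together with the lightlike condition to pin down $\nabla g(o)$, invoke Klyachin's theorem (ii) to produce $L$, identify its direction as the unique null direction of the degenerate tangent plane, and then obtain $\tilde L$ by restricting the Wick-rotated function to the symmetry axis, where $h(\tilde t,0)=\varepsilon\tilde t$. The only differences are presentational: you spell out the tangent-plane computation of the null direction and verify the degeneracy of $\mathrm{I}_h$ directly (which makes your worry about applying Klyachin's theorem to graphs over timelike planes moot), whereas the paper simply asserts the direction $(\pm1,0,1)$ and reads off $\tilde L$ from $h(\tilde t,0)=\pm\tilde t$.
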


\begin{proof}
By the even symmetry of $g$ to the $x$-axis, we have $g_{x}(0,y)=g_{xy}(0,y)=0$. Hence the assumptions $B_g(o)=0$ and $\nabla{B_g}(o)=0$ are equivalent to $g_y(o)=\pm 1$ and $g_{yy}(o)=0$. Therefore, by Theorem \ref{thm:Klyachin}, the graph of $g$ has a lightlike line $L$ whose direction is $(\pm 1,0,1)$. Since $g(o)=0$, $L$ is in either $\{(y,0,y)\mid y\in \mathbb{R}\}$ or $\{(-y,0,y)\mid y\in \mathbb{R}\}$. By the Wick rotation, $L$ is moved to the following lightlike line $\tilde{L}$ on the graph of $h$ in $\mathbb{L}^3(\tilde{t},\tilde{x},\tilde{y})$:
\[
\tilde{y}=h(\tilde{t},0)=g(0,\tilde{t})=\pm \tilde{t},
\]
where the directions of the $L$ and $\tilde{L}$ are depending on the sign of $g_y(o)=\pm1$. The proof of (ii) is same as the previous case.
\end{proof}
\begin{remark}
For the case (i) in Lemma \ref{Lemma:lightlikelines}, by the even symmetry of $g$ and the equation (\ref{eq:ZMC}), $\nabla{B_g}(o)=0$ follows automatically.
\end{remark}
For any zero mean curvature surface containing a lightlike line segment $L$, we may assume that $L$ is in $\{(y,0,y)\mid y\in \mathbb{R}\}$. Locally such surface is represented as $t=g(x,y)$ near $L$. Near $L$, we can expand the function $g$ as
\begin{equation}
g(x,y)=y+\frac{\alpha_g(y)}{2}x^2+\beta_g(x,y)x^3,
\end{equation}
where $\alpha_g=\alpha_g(y)$ and $\beta_g=\beta_g(x,y)$ are real analytic functions. In \cite{UY}, the function $\alpha_g$ is called the {\it (second) approximation function} of the graph of $g$, and $\alpha_g$ can be written as $\alpha_g(y)=g_{xx}(0,y)$. The approximation function $\alpha_g$ satisfies the differential equation
\begin{equation}\label{eq:alpha}
\frac{d\alpha_g}{dy}(y)+\alpha_g ^2(y)+\mu =0,
\end{equation}
where $\mu $ is a real constant called the {\it characteristic} along $L$, see \cite{FujimoriETAL1}. By a homothetic change, we can normalize $\mu$ to be $1$, $0$, or $-1$, and $\alpha_g$ is one of the following explicit solutions to (\ref{eq:alpha}) depending on $\mu=1$, $0$ or $-1$:
 \begin{align}
 & \alpha^+:=-\tan(y+c)\quad (|c|<\frac{\pi}{2})\quad \text{for $\mu=1$},& \nonumber\\
 & \alpha^0_{\rm I}:=0,\quad \alpha^0_{\rm II}:=(y+c)^{-1}\quad (c\neq 0)\quad  \text{for $\mu=0$},& \nonumber\\
 & \alpha^{-}_{\rm I}:=\tanh(y+c)\quad (c\in \mathbb{R}),\quad  \alpha^{-}_{\rm II}:=\coth(y+c)\quad (c\neq0),\quad \alpha^{-}_{\rm III}:=\pm 1\quad \text{for $\mu=-1$}.&\nonumber
 \end{align}
Therefore all zero mean curvature surfaces containing a lightlike line are categorized into the above six classes. In \cite{A1,FujimoriETAL1,FujimoriETAL2,FujimoriETAL3}, many important examples of zero mean curvature surfaces with lightlike lines were constructed, and the types of $\alpha_g$ of these examples were determined. On the causal character near the lightlike line $L$, the following property is known.
\begin{proposition}[\cite{FujimoriETAL1}]\label{Prop:FujimoriETAL1}
If $\mu>0$ (resp.\ $\mu<0$), the surface is spacelike (resp.\ timelike) on both-sides of $L$. On the other hand, if $\mu=0$, the causal characters of surfaces near $L$ need not be unique.
\end{proposition}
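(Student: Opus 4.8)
The plan is to read the causal character of the surface off the expansion
\[
g(x,y) = y + \frac{\alpha_g(y)}{2}x^2 + \beta_g(x,y)x^3
\]
valid near the lightlike line $L \subset \{(y,0,y) \mid y \in \mathbb{R}\}$, by extracting the leading term of the function $B_g = 1 - g_x^2 - g_y^2$ in the variable $x$ transversal to $L$. First I would differentiate: $g_x = \alpha_g(y)x + O(x^2)$ and $g_y = 1 + \tfrac12\alpha_g'(y)x^2 + O(x^3)$, so that
\[
B_g(x,y) = -\bigl(\alpha_g(y)^2 + \alpha_g'(y)\bigr)x^2 + O(x^3),
\]
where the remainder, being real analytic in $y$, is uniformly bounded on compact $y$-subintervals of the domain of $L$. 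Next I would substitute the structure equation \eqref{eq:alpha}, namely $\alpha_g'(y) + \alpha_g(y)^2 = -\mu$, to arrive at the clean expression $B_g(x,y) = \mu\,x^2\bigl(1+O(x)\bigr)$.

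With this in hand the first two assertions are immediate: when $\mu > 0$ (resp.\ $\mu < 0$) the uniform bound provides a punctured neighbourhood of $L$ on which $B_g$ has the sign of $\mu$, so the surface is spacelike (resp.\ timelike) on both sides of $L$. For the borderline case $\mu = 0$ I would argue that the quadratic term of $B_g$ vanishes, so its sign is dictated by the cubic and higher terms; these involve $\beta_g$ and are not determined by the class of $\alpha_g$. Concretely, for $\alpha_g \equiv 0$ one computes $B_g(x,y) = -2\,(\partial_y\beta_g)(0,y)\,x^3 + O(x^4)$, so the lightlike plane $t = y$ (for which $\beta_g \equiv 0$) stays lightlike, whereas a solution to \eqref{eq:ZMC} of this type with $(\partial_y\beta_g)(0,\cdot) \not\equiv 0$ is spacelike on one side of $L$ and timelike on the other; pointing to the explicit examples of types $\alpha^0_{\rm I}$ and $\alpha^0_{\rm II}$ in \cite{FujimoriETAL1} then finishes the argument.

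The only subtle point is the uniformity of the error estimate: one must check that the $O(x^3)$ term in $B_g$ is controlled uniformly as $y$ ranges over a compact subinterval, so that ``both sides of $L$'' genuinely refers to a neighbourhood of a line segment and not merely of a single point. This follows automatically from real analyticity of $g$, and hence of $\alpha_g$ and $\beta_g$, so I do not expect a serious obstacle; the heart of the matter is the short computation of the leading coefficient of $B_g$ combined with the substitution of \eqref{eq:alpha}.
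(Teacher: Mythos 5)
Your computation is correct: expanding $B_g=1-g_x^2-g_y^2$ along $L$ gives $B_g=-\bigl(\alpha_g'+\alpha_g^2\bigr)x^2+O(x^3)=\mu x^2+O(x^3)$ by \eqref{eq:alpha}, which settles the sign for $\mu\neq 0$, and your cubic-order analysis for $\mu=0$ (lightlike plane versus a type-changing example with $(\partial_y\beta_g)(0,\cdot)\not\equiv 0$) correctly shows non-uniqueness. Note, however, that the paper does not prove this statement at all --- it is quoted from \cite{FujimoriETAL1} --- so there is no in-paper argument to compare against; your proposal is essentially a reconstruction of the standard proof from that reference. The only point you leave to the literature is the existence of an actual solution to \eqref{eq:ZMC} with $\mu=0$ and $(\partial_y\beta_g)(0,\cdot)\not\equiv 0$, which is legitimate since such type-changing examples are exactly what \cite{FujimoriETAL2} constructs.
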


Based on Proposition \ref{prop:BI and ZMC} and Lemma \ref{Lemma:lightlikelines}, types of the approximation functions of zero mean curvature surfaces containing lightlike lines are transformed via Wick rotations as follows.

\begin{theorem}\label{thm:transformations of L-line}
Let $t=g(x,y)$ be a solution to (\ref{eq:ZMC}) as in Lemma \ref{Lemma:lightlikelines} with the approximation function $\alpha_g$ along a lightlike line segment $L$ on the graph of $g$.
\begin{itemize}
\item[(i)] If $g$ is even with respect to the $x$-axis, then the solution $\tilde{y}=h(\tilde{t},\tilde{x})=g(i\tilde{x},\tilde{t})$ 
has the approximation function $\alpha_h=\alpha_g$ along $\tilde{L}$ as in (i) of Lemma \ref{Lemma:lightlikelines}.
\item[(ii)] If $g$ is odd with respect to the $x$-axis, then the solution $\tilde{y}=h(\tilde{t},\tilde{x})=-ig(i\tilde{t},\tilde{x})$ has the approximation function $\alpha_h=i(\alpha_g\circ i)$ along $\tilde{L}$ as in (ii) of Lemma \ref{Lemma:lightlikelines}. Moreover each of $\alpha_g$ or $\alpha_h$ is an odd function, and it is one of $\alpha^+$, $\alpha^0_{I}$ or $\alpha^-_{I}$. A graph of type $\alpha^+$ (resp.\ $\alpha^-_{I}$) is transformed to a graph of type $\alpha^-_{I}$ (resp.\ $\alpha^+$), and a graph of type $\alpha^0_{I}$ is transformed to a graph of type $\alpha^0_{I}$.
\end{itemize}
\end{theorem}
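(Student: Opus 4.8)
The plan is to reduce everything to the behavior of the approximation function under the substitutions specified in Proposition \ref{prop:BI and ZMC} and Lemma \ref{Lemma:lightlikelines}. First I would fix the normalization: since $L$ lies in $\{(y,0,y)\mid y\in\mathbb{R}\}$ and $g(o)=0$, I expand $g(x,y)=y+\tfrac{\alpha_g(y)}{2}x^2+\beta_g(x,y)x^3$ with $\alpha_g(y)=g_{xx}(0,y)$, and similarly I must check that the Wick-rotated solution $h$ can genuinely be put in the normal form $h(\tilde t,\tilde x)=\tilde t+\tfrac{\alpha_h(\tilde t)}{2}\tilde x^2+\beta_h\tilde x^3$ along $\tilde L$. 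For case (i), $h(\tilde t,\tilde x)=g(i\tilde x,\tilde t)$, so substituting the expansion of $g$ in the variable $x=i\tilde x$ gives $h(\tilde t,\tilde x)=\tilde t+\tfrac{\alpha_g(\tilde t)}{2}(i\tilde x)^2+\beta_g(i\tilde x,\tilde t)(i\tilde x)^3=\tilde t-\tfrac{\alpha_g(\tilde t)}{2}\tilde x^2-i\beta_g(i\tilde x,\tilde t)\tilde x^3$; reading off the quadratic coefficient and comparing with the normal form after the change $\tilde L\subset\{(-\tilde y,0,\tilde y)\}$ (the sign of the line direction being governed by $g_y(o)=\pm1$, as in Lemma \ref{Lemma:lightlikelines}), I get $\alpha_h=\alpha_g$. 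This is essentially a bookkeeping step once the correct identification of which null line $\tilde L$ sits on is made.

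For case (ii), $h(\tilde t,\tilde x)=-ig(i\tilde t,\tilde x)$ with $g$ odd in $x$. Here the lightlike line $L$ on the graph of $g$ lies in $\{(x,x,0)\}$ or $\{(-x,x,0)\}$, so I would first change coordinates so that $L$ is of the standard form, expand $g$ in its normal form along $L$, and then track how $\alpha_g$ is carried by the substitution $(x,y)\mapsto(i\tilde t,\tilde x)$ followed by multiplication by $-i$. A short computation with the chain rule (exactly parallel to the derivative list in the proof of Proposition \ref{prop:BI and M_odd}) shows that the quadratic coefficient of $h$ along $\tilde L$ is $\alpha_h(\tilde t)=i\,\alpha_g(i\tilde t)$, i.e.\ $\alpha_h=i(\alpha_g\circ i)$. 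The odd symmetry of $g$ in $x$ forces $\alpha_g$ to be an odd function: indeed the even symmetry/odd symmetry discussion in Lemma \ref{lemma:symmetry} combined with $\alpha_g(y)=g_{xx}(0,y)$ — one checks that $g(x,y)$ odd in $x$ makes $g_{xx}(0,y)$ odd in $y$, using the ZMC equation (\ref{eq:ZMC}) restricted to $x=0$ together with the line condition. An odd solution of the Riccati equation $\alpha'+\alpha^2+\mu=0$ must satisfy $\alpha(0)=0$, which rules out $\alpha^0_{\mathrm{II}},\alpha^{-}_{\mathrm{II}},\alpha^{-}_{\mathrm{III}}$ (these do not vanish at any recentering point giving an odd function), leaving exactly $\alpha^+=-\tan(y+c)$ with $c=0$, $\alpha^0_{\mathrm I}=0$, and $\alpha^{-}_{\mathrm I}=\tanh(y+c)$ with $c=0$.

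Finally I would verify the type-switching statement by applying $\alpha_h=i(\alpha_g\circ i)$ to these three normalized functions: $i\tan(i\tilde t)=i\cdot i\tanh\tilde t=-\tanh\tilde t$, so $\alpha_g=-\tan(\cdot)\ (\mu=1)$ maps to $\alpha_h=\tanh(\cdot)$, i.e.\ $\alpha^+\to\alpha^-_{\mathrm I}$ (and $\mu$ flips sign, consistent with Proposition \ref{Prop:FujimoriETAL1}); by symmetry of the involution $\alpha^-_{\mathrm I}\to\alpha^+$; and $0\mapsto 0$, so $\alpha^0_{\mathrm I}\to\alpha^0_{\mathrm I}$. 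I expect the main obstacle to be not these identities but the justification that $h$ is genuinely put in the $\beta$-normal form of \cite{UY} along $\tilde L$ — that is, that the Wick rotation of a real analytic solution with a lightlike line again produces a real analytic solution whose lightlike points near $\tilde L$ form exactly the line $\tilde L$ (so that the approximation function $\alpha_h$ is well defined and satisfies (\ref{eq:alpha})); this is handled by Lemma \ref{Lemma:lightlikelines} together with the observation that the coefficient list coming from the expansion of $g$ transforms term by term under the substitution, so the defining relation (\ref{eq:alpha}) for $\alpha_h$ follows from that for $\alpha_g$ by the identity theorem, just as in the proofs of Theorems \ref{thm:BI and M} and \ref{thm:BI and ZMC}.
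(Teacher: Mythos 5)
Your overall strategy -- expand $g$ in its normal form, push the expansion through the substitution, use the odd symmetry to force $\alpha(0)=0$ and hence restrict to $\alpha^+,\alpha^0_{\rm I},\alpha^-_{\rm I}$ with $c=0$, then evaluate $i\,\alpha_g(i\,\cdot)$ on these three functions -- is the same as the paper's, and your explicit verification of the type switching ($-\tan\mapsto\tanh\mapsto-\tan$, $0\mapsto0$) is correct and in fact more detailed than what the paper writes. But there is one step you gloss over, and your substitute explanation for it is wrong. The approximation function is defined for a graph over the \emph{spacelike} plane with the lightlike line in $\{(y,0,y)\}$, whereas $h$ is a graph $\tilde y=h(\tilde t,\tilde x)$ over the \emph{timelike} $\tilde t\tilde x$-plane. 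So $\alpha_h$ is not the quadratic coefficient of $h$; it is $f_{\tilde x\tilde x}(0,\tilde y)$ for the inverted graph $\tilde t=f(\tilde x,\tilde y)$. Differentiating $\tilde y=h(f(\tilde x,\tilde y),\tilde x)$ twice in $\tilde x$ and using $h_{\tilde t}=1$, $h_{\tilde x}=0$ on $\tilde L$ gives $f_{\tilde x\tilde x}=-h_{\tilde x\tilde x}$ there; this inversion is exactly what the paper's equations for $f_{\tilde x}$ and $f_{\tilde x\tilde x}$ supply, and it is the source of the sign that turns your computed coefficient $h_{\tilde x\tilde x}(\tilde t,0)=-\alpha_g(\tilde t)$ in case (i) into $\alpha_h=+\alpha_g$, and $h_{\tilde x\tilde x}(\tilde t,0)=-i\alpha_g(i\tilde t)$ in case (ii) into $\alpha_h=+i(\alpha_g\circ i)$. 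Attributing this sign to ``which null line $\tilde L$ sits on'' is not correct: after the normalization $L\subset\{(y,0,y)\}$ that choice is already fixed, and with no inversion you would land on $\alpha_h=-\alpha_g$ in case (i).

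A second, smaller point: in case (ii) the line $L$ lies in $\{(x,x,0)\}$, so the normal form swaps the roles of the variables and $\alpha_g(x)=g_{yy}(x,0)$, not $g_{xx}(0,y)$ (indeed $g_{xx}(0,y)\equiv0$ by the odd symmetry). The oddness of $\alpha_g$ is then immediate from $g(-x,y)=-g(x,y)$, with no need for the zero mean curvature equation restricted to $x=0$; your argument via $g_{xx}(0,y)$ and equation (\ref{eq:ZMC}) is aimed at the wrong second derivative. Once these two points are repaired, your proof coincides with the paper's.
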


\begin{proof}
Near the lightlike line $\tilde{L}$, we can write the graph of $h$ as the graph of the $\tilde{x}\tilde{y}$-plane of a function $f=f(\tilde{x},\tilde{y})$ in $\mathbb{L}^3(\tilde{t},\tilde{x},\tilde{y})$. First we prove (i). By Lemma \ref{Lemma:lightlikelines}, we may assume that $L\subset \{(y,0,y)\mid y\in \mathbb{R}\}$ and  $\tilde{L} \subset \{(\tilde{y},0,\tilde{y})\mid \tilde{y}\in \mathbb{R}\}$. The approximation functions $\alpha_g$ and $\alpha_h$ are written as
\[
\alpha_g(y)=g_{xx}(0,y),\quad \alpha_h(\tilde{y})=f_{\tilde{x}\tilde{x}}(0,\tilde{y}).
\]
Taking the derivative of the equation $\tilde{y}=h(f(\tilde{x},\tilde{y}),\tilde{x})$ with respect to $\tilde{x}$, we have
\begin{equation}\label{eq:f_x}
0=h_{\tilde{t}}f_{\tilde{x}}+h_{\tilde{x}}.
\end{equation}
Since the graph of $g$ contains the lightlike line segment $L$ in $\{(y,0,y)\mid y\in \mathbb{R}\}$, $h_{\tilde{x}}(\tilde{t},0)=0$ and $h_{\tilde{t}}(\tilde{t},0)=1$. Hence we have $f_{\tilde{x}}(0,\tilde{y})=0$ on $\tilde{L}$ by (\ref{eq:f_x}). Moreover, by taking the derivative of (\ref{eq:f_x}) with $\tilde{x}$ again,
\begin{equation}\label{eq:f_xx}
0=h_{\tilde{t}\tilde{t}}f_{\tilde{x}}^2+2h_{\tilde{t}\tilde{x}}f_{\tilde{x}}+h_{\tilde{t}}f_{\tilde{x}\tilde{x}}+h_{\tilde{x}\tilde{x}}.
\end{equation}
Since $f_{\tilde{x}}(0,\tilde{y})=0$, $h_{\tilde{t}}(\tilde{y},0)=1$ and $h_{\tilde{x}\tilde{x}}(\tilde{y},0)=-\alpha_g(\tilde{y})$ on $\tilde{L}$, we obtain $f_{\tilde{x}\tilde{x}}(0,\tilde{y})=\alpha_g(\tilde{y})$ by (\ref{eq:f_xx}). Therefore we have $\alpha_h=\alpha_g$. 

Next we prove (ii). By (ii) of Lemma \ref{Lemma:lightlikelines}, we may assume that $L\subset \{(x,x,0)\mid x\in \mathbb{R}\}$ and  $\tilde{L} \subset \{(\tilde{y},0,\tilde{y})\mid \tilde{y}\in \mathbb{R}\}$. The approximation functions $\alpha_g$ and $\alpha_h$ are written as
\[
\alpha_g(x)=g_{yy}(x,0),\quad \alpha_h(\tilde{y})=f_{\tilde{x}\tilde{x}}(0,\tilde{y}).
\]
Since the graph of $h$ contains the lightlike line $\tilde{L}$ in $\{(\tilde{y},0,\tilde{y})\mid \tilde{y}\in \mathbb{R}\}$, $h_{\tilde{t}}(\tilde{t},0)=1$ and $h_{\tilde{x}}(\tilde{t},0)=0$. Hence we have $f_{\tilde{x}}(0,\tilde{y})=0$ on $\tilde{L}$ by (\ref{eq:f_x}). The equation (\ref{eq:f_xx}) becomes
\[
0=f_{\tilde{x}\tilde{x}}(0,\tilde{y})+h_{\tilde{x}\tilde{x}}(\tilde{y},0) \text{ on $\tilde{L}$.}
\]
Since $h_{\tilde{x}\tilde{x}}(\tilde{t},0)=-i\alpha_g(i\tilde{t})$ and $\alpha_h(\tilde{y})=f_{\tilde{x}\tilde{x}}(0,\tilde{y})$, we obtain $\alpha_h(\tilde{y})=i\alpha_g(i\tilde{y})$. Moreover, by the symmetry of $g$ with respect to the $x$-axis, $\alpha_g$ and $\alpha_h$ are odd functions. Since $\alpha^+=-\tan{y}$, $\alpha^0_{I}=0$ or $\alpha^-_{I}=\tanh{y}$, here integral constants are determined by the odd symmetry of $g$ automatically, are the only approximation functions with the odd symmetry in explicit solutions to (\ref{eq:alpha}), we obtain the desired result.
\end{proof}
 By Proposition \ref{Prop:FujimoriETAL1}, we have the following corollary.
 
\begin{corollary}\label{Cor:causality_near_lines}
Let $g$ be same as in Theorem \ref{thm:transformations of L-line}. 
If $g$ is even (resp.\ odd) with respect to the $x$-axis, then the Wick rotation with respect to the $x$-axis preserves (resp.\ changes) the causal characters of graphs near lightlike lines, except for $\alpha^0_{I}$ and $\alpha^0_{II}$ types.
\end{corollary}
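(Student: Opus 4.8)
The plan is to deduce everything from Theorem \ref{thm:transformations of L-line} together with Proposition \ref{Prop:FujimoriETAL1}, so that the whole argument reduces to tracking the sign of the characteristic $\mu$ along the lightlike line. First I would record how $\mu$ changes under the Wick rotation. In the even case, Theorem \ref{thm:transformations of L-line}(i) gives $\alpha_h=\alpha_g$, and substituting this identity into the defining ODE (\ref{eq:alpha}) for the approximation function shows immediately that the characteristic of $h$ along $\tilde L$ coincides with that of $g$ along $L$; in particular its sign, hence its class among $\alpha^+$, $\alpha^0_I$, $\alpha^0_{II}$, $\alpha^-_I$, $\alpha^-_{II}$, $\alpha^-_{III}$, is unchanged. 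In the odd case, Theorem \ref{thm:transformations of L-line}(ii) already restricts both $\alpha_g$ and $\alpha_h$ to the three odd classes $\alpha^+$, $\alpha^0_I$, $\alpha^-_I$ and says that $\alpha^+$ and $\alpha^-_I$ are interchanged while $\alpha^0_I$ is fixed; since $\alpha^+$ has $\mu=1$ and $\alpha^-_I$ has $\mu=-1$, the sign of $\mu$ is reversed exactly when $\mu\neq0$, while $\mu=0$ (the $\alpha^0_I$ case) is preserved.

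Next I would invoke Proposition \ref{Prop:FujimoriETAL1}: a zero mean curvature surface with a lightlike line is spacelike on both sides of the line when $\mu>0$, timelike on both sides when $\mu<0$, and of unspecified causal character when $\mu=0$. Combining this with the previous step gives the corollary. In the even case, for every class other than $\alpha^0_I$ and $\alpha^0_{II}$ one has $\mu\neq0$ and $\mu$ is preserved, so $g$ and $h$ are both spacelike, or both timelike, on both sides of their respective lines, i.e.\ the causal character near the line is preserved; the two remaining classes are precisely the ones excluded in the statement. In the odd case the only class with $\mu=0$ is again $\alpha^0_I$ (no $\alpha^0_{II}$ occurs here), and for $\alpha^+$ and $\alpha^-_I$ the sign of $\mu$ flips, so a surface spacelike on both sides of $L$ becomes one timelike on both sides of $\tilde L$, and vice versa; that is, the Wick rotation changes the causal character near the line.

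The only point that needs a little care is the passage from the functional identities $\alpha_h=\alpha_g$ and $\alpha_h=i(\alpha_g\circ i)$ in Theorem \ref{thm:transformations of L-line} to a statement about the normalized value of $\mu$: one must make sure that the homothety used to rescale $\mu$ to $\{1,0,-1\}$ is compatible with the Wick rotation, so that the classes listed there really do carry the stated characteristics. This is straightforward, since the Wick rotation acts on the coordinate along the line by the identity in the even case, and by $y\mapsto iy$ combined with the overall factor $-i$ in the odd case, and in both cases (\ref{eq:alpha}) transforms accordingly; no genuine obstacle arises, and the rest is bookkeeping with the explicit list of solutions to (\ref{eq:alpha}).
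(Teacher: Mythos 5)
Your argument is correct and is essentially the paper's own: the paper derives this corollary directly from Theorem \ref{thm:transformations of L-line} combined with Proposition \ref{Prop:FujimoriETAL1}, exactly as you do by tracking the sign of the characteristic $\mu$ (preserved in the even case, reversed in the odd case, with the $\mu=0$ classes excluded because Proposition \ref{Prop:FujimoriETAL1} leaves their causal character undetermined). Your extra check that $\alpha_h=i(\alpha_g\circ i)$ turns \eqref{eq:alpha} with $\mu$ into the same equation with $-\mu$ is a worthwhile detail the paper leaves implicit.
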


\section{Examples}\label{Sec.5}
In this section, we give examples of minimal surfaces in $\mathbb{E}^3$ and zero mean curvature surfaces in $\mathbb{L}^3$, and explain how these examples are related to each other via Wick rotations. 
\begin{example}\label{Ex:5.1} By Wick rotations, we can make many correspondences among catenoids in $\mathbb{E}^3$ and $\mathbb{L}^3$ as follows. First let us consider the upper half part of the catenoid in $\mathbb{E}^3$ given by $\cosh^2{z}=x^2+y^2$, which is written as $z=f(x,y)=\arccosh{(\sqrt{x^2+y^2})}$. Since the function $f$ is even with respect to $y$, we can take the Wick rotation
\[
\tilde{y}=h(\tilde{t},\tilde{x})=f(\tilde{x},i\tilde{t})=\arccosh{(\sqrt{\tilde{x}^2-\tilde{t}^2})}\quad \text{in $\mathbb{L}^3(\tilde{t},\tilde{x},\tilde{y})$}.
\]
By (i) of Theorem \ref{thm:BI and M}, it is a timelike minimal surface with negative Gaussian curvature. The graph of $h$ has the implicit form $\cosh^2{\tilde{y}}=\tilde{x}^2-\tilde{t}^2$, which is called the {\it timelike hyperbolic catenoid of type I}. Next if we rotate the catenoid in $\mathbb{E}^3$ as $\cosh^2{y}=x^2+z^2$, we obtain the graph $z=\sqrt{\cosh^2{y}-x^2}$, which is also even with respect to $y$. By (i) of Theorem \ref{thm:BI and M}, its Wick rotation
\[
\tilde{y}=\sqrt{\cos^2{\tilde{t}}-\tilde{x}^2}\quad \text{in $\mathbb{L}^3(\tilde{t},\tilde{x},\tilde{y})$}
\]
is also a timelike minimal surface with negative Gaussian curvature. This surface has the implicit form $\tilde{y}^2=\cos^2{\tilde{t}}-\tilde{x}^2$ called the {\it timelike elliptic catenoid}. On the other hand, if we take the Wick rotation of this surface with respect to $\tilde{x}$, we obtain the surface $t=\sqrt{\cos^2{y}+x^2}$ called the {\it spacelike hyperbolic catenoid}. Finally, if we start the {\it spacelike elliptic catenoid} $y=\sqrt{\sinh^2{t}-x^2}$, we obtain its Wick rotation with respect to $x$
\[
\tilde{t}=\sqrt{\sinh^2{\tilde{y}+\tilde{x}^2}},
\]
which is known as the {\it timelike hyperbolic catenoid of type II} (see Figure \ref{Fig5_1}). About the names of catenoids in $\mathbb{L}^3$, see \cite{KKSY,Lopez} for details.
\end{example}
%%%%%%%%%%%%%%%%%
%\vspace{-0.5cm}
\begin{figure}%[!h]
\begin{center}
\begin{tabular}{c}
%1
%\hspace{+0.5cm}
\begin{minipage}{0.3\hsize}
\begin{center}
\vspace{-2.5cm}
\hspace{-0.8cm}
\includegraphics[clip,scale=0.42,bb=0 0 350 380]{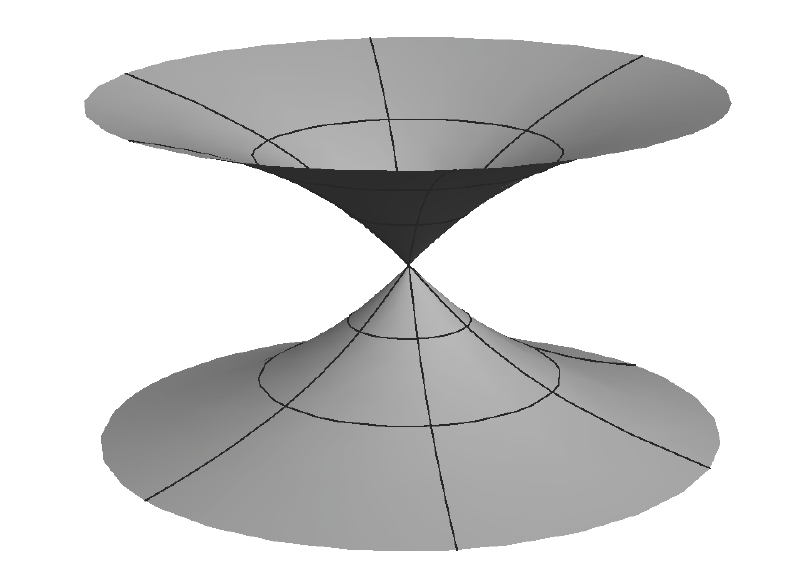}
%\vspace{0.5cm}
\end{center}
\end{minipage}
%2
\hspace{-1.9cm}
\begin{minipage}{0.3\hsize}
\begin{center}
\vspace{-1.0cm}
\includegraphics[clip,scale=0.42,bb=0 0 350 380]{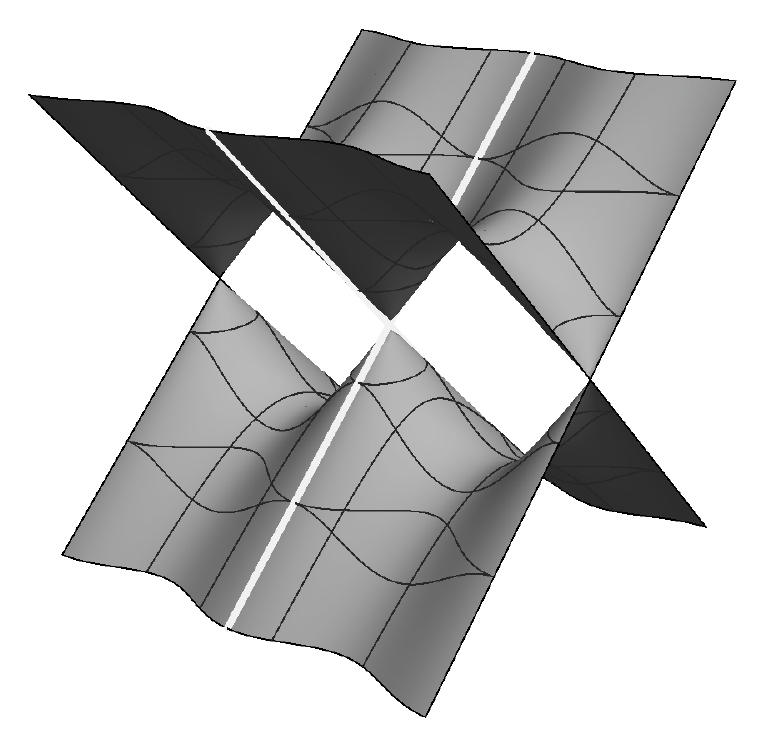}
\vspace{0.3cm}
\end{center}
\end{minipage}
%3
\hspace{-1.0cm}
\begin{minipage}{0.3\hsize}
\begin{center}
\vspace{-0.5cm}
\includegraphics[clip,scale=0.42,bb=0 0 350 380]{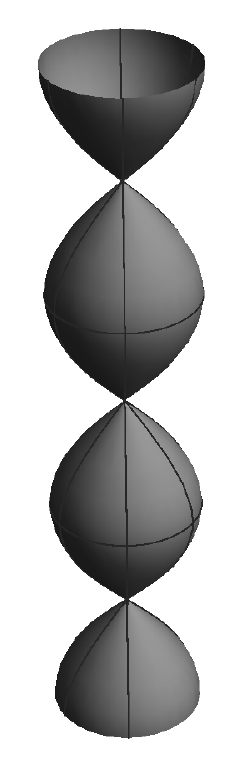}
\vspace{0.5cm}
\end{center}
\end{minipage}
%4
\hspace{-3.0cm}
\begin{minipage}{0.3\hsize}
\begin{center}
\vspace{-1.0cm}
\includegraphics[clip,scale=0.42,bb=0 0 350 380]{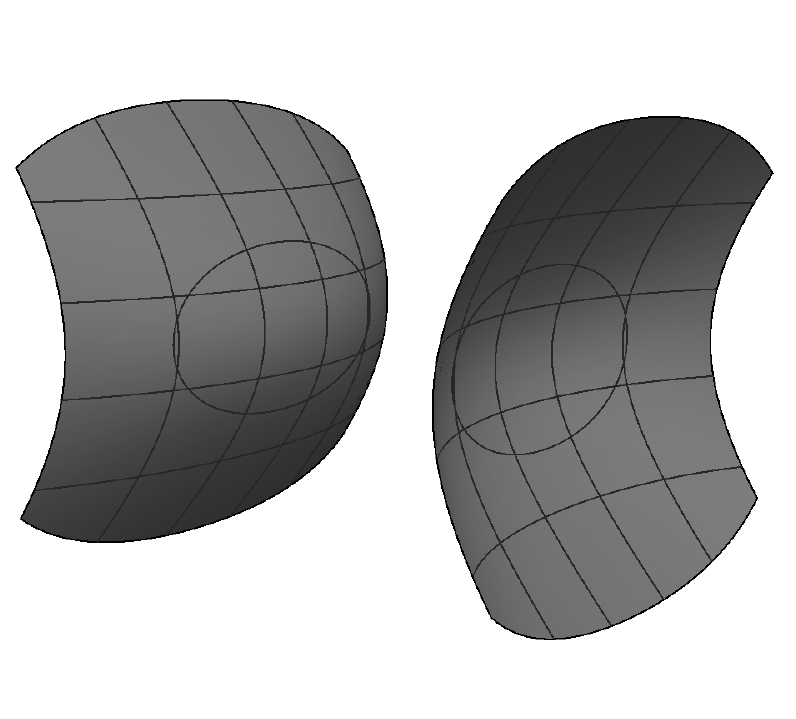}
\vspace{0.3cm}
\end{center}
\end{minipage}
%5
\hspace{-1.3cm}
\begin{minipage}{0.3\hsize}
\begin{center}
\vspace{-0.8cm}
\includegraphics[clip,scale=0.42,bb=0 0 350 380]{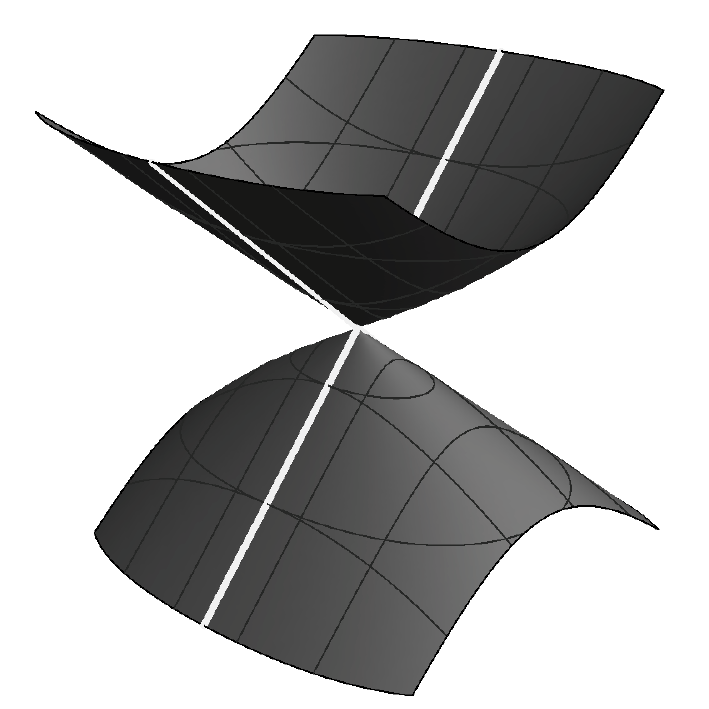}
\vspace{0.5cm}
\end{center}
\end{minipage}

\end{tabular}
\end{center}
\vspace{-0.5cm}
\caption{From left to right: the spacelike elliptic catenoid, the spacelike hyperbolic catenoid, the timelike elliptic catenoid, the timelike hyperbolic catenoid of type I and the timelike hyperbolic catenoid of type II.}\label{Fig5_1}

\end{figure}
%%%%%%%%%%%%%%%%%%%%%%
As the above examples show, Wick rotations and isometries on the ambient space do not commute in general. By using this, we can construct many solutions to (\ref{eq:M}), (\ref{eq:ZMC}) and (\ref{eq:BI}) starting from a given solution.
%%%%%%%%%%%%%%%%%%%%%%
\begin{example}\label{Ex:5.2}
The entire zero mean curvature graph $t=g(x,y)=x\tanh{y}$, which was discovered by Kobayashi \cite{Kobayashi} is a solution to (\ref{eq:ZMC}). Since it is odd with respect to $x$ and $y$, we can take the Wick rotation
\[
z=f(x,y)=-g(ix,iy)=x\tan{y}
\]
by Theorem \ref{thm:M and ZMC}, which is nothing but the helicoid in $\mathbb{E}^3$.
\end{example}

%%%%%%%%%%%%%%%%%%%%%%
\begin{example}\label{Ex:5.3}
In Introduction, we saw a correspondence between the doubly periodic Scherk surface in $\mathbb{E}^3$ and Scherk type zero mean curvature surface in $\mathbb{L}^3$. Here we start from the singly periodic Scherk minimal surface $\sin{z}=-\sinh{x}\sinh{y}$, which is also called {\it Scherk saddle tower} (see Figure \ref{Fig:5.3}). Locally it can be written as $z=f(x,y)=-\arcsin{(\sinh{x}\sinh{y})}$, which is odd with respect to $x$ and $y$ axes. By Theorem \ref{thm:M and ZMC}, its Wick rotation
\[
t=g(x,y)=-f(ix,iy)=-\arcsin{(\sin{x}\sin{y})}
\]
is spacelike surface. This surface has the triply periodic implicit form $\sin{t}=\sin{x}\sin{y}$, which is called the {\it spacelike Scherk surface} in \cite[Example 3]{FujimoriETAL1}.

\begin{remark}
In contrast with Scherk surfaces in Introduction and Example \ref{Ex:5.3}, as explained in \cite[Example 1]{Lee}, the doubly periodic Scherk minimal surface corresponds to the spacelike Scherk surface, and the singly periodic Scherk minimal surface corresponds to the Scherk type zero mean curvature graph (\ref{eq:ZMC_Scherk}) by the Calabi's correspondence in Introduction.
\end{remark}
%%%%%%%%%%%%%%%%%
\begin{figure}[!h]
\vspace{-0.3cm}
\begin{center}
\begin{tabular}{c}
%1
\hspace{+0.1cm}
\begin{minipage}{0.4\hsize}
\begin{center}
\vspace{-0.8cm}
\includegraphics[clip,scale=0.35,bb=0 0 350 380]{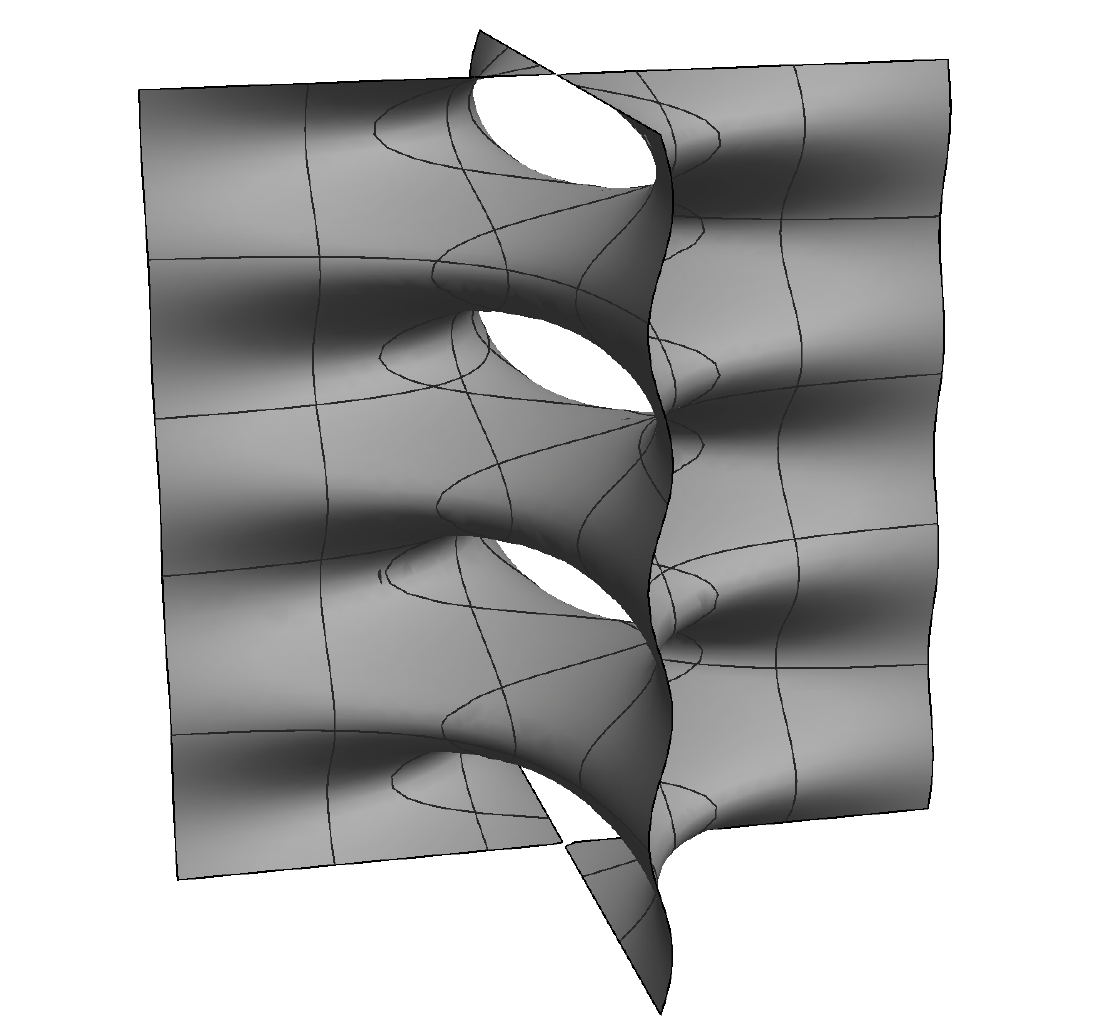}
\vspace{0.5cm}
\end{center}
\end{minipage}
%2
\hspace{0.5cm}
\begin{minipage}{0.4\hsize}
\begin{center}
\vspace{-1.0cm}
\includegraphics[clip,scale=0.35,bb=0 0 350 380]{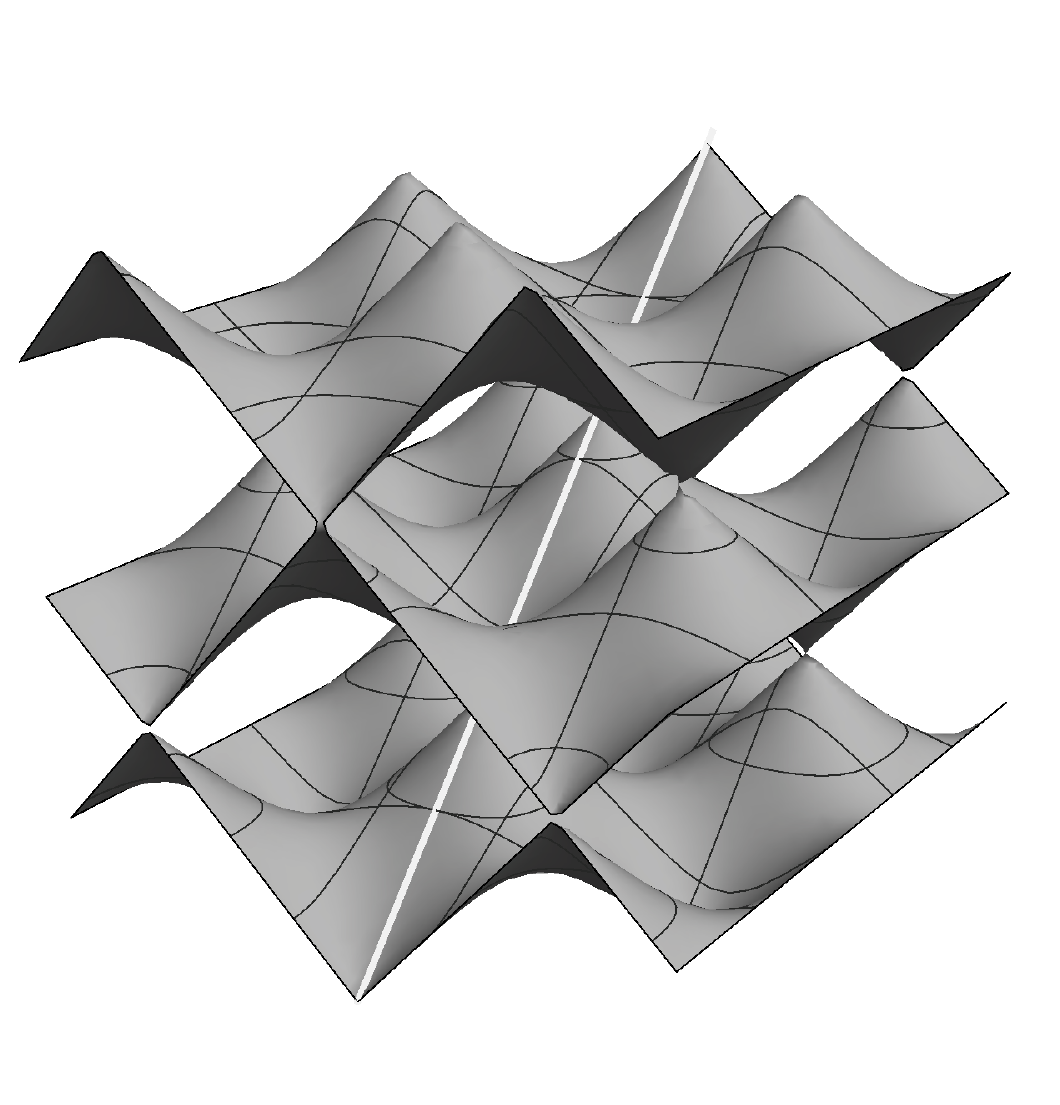}
\vspace{0.3cm}
\end{center}
\end{minipage}

\end{tabular}
\end{center}
\vspace{-0.3cm}
\caption{The singly periodic Scherk minimal surface (the left figure) and spacelike Scherk surface which have lightlike lines (the right figure).}\label{Fig:5.3}

\end{figure}
%%%%%%%%%%%%%%%%%%%%%%

\end{example}

At the end of this section, we give examples of zero mean curvature surfaces containing lightlike lines. In particular, we reveal new relationships among some zero mean curvature surfaces with symmetries along lightlike lines, most of them were constructed in \cite{FujimoriETAL1}, by Wick rotations.  

\begin{example}\label{Ex:5.4}
The spacelike hyperbolic catenoid given by $t^2=\sin^2{x}+y^2$ and timelike hyperbolic catenoid given by $t^2=\sinh^2{x}+y^2$ in Example \ref{Ex:5.1} are have lightlike lines (see Figure \ref{Fig5_1}). Both of these surfaces are surfaces of $\alpha^0_{II}$ type along the lightlike lines $\{(y,0,y)\mid x\in \mathbb{R}\}$ and $\{(-y,0,y)\mid x\in \mathbb{R}\}$ (see \cite[Example 2]{FujimoriETAL1}). If we translate the spacelike hyperbolic catenoid, and take a graph $t=g(x,y)=\sqrt{\sin^2{x}+(y+1)^2}-1$, the function $g$ is even with respect to $x$. Therefore we can take the Wick rotation
\[ 
\tilde{y}=h(\tilde{t},\tilde{x})=g(i\tilde{x},\tilde{t})=\sqrt{-\sinh^2{\tilde{x}}+(\tilde{t}+1)^2}-1,
\]
which is nothing but the timelike hyperbolic catenoid in $\mathbb{L}^3(\tilde{t},\tilde{x},\tilde{y})$. Since these surfaces are transformed each other, they share the same approximation function by (i) of Theorem \ref{thm:transformations of L-line}.
\end{example}

\begin{example}\label{Ex:5.4}The spacelike Scherk surface $\sin{t}=\cos{x}\sin{y}$ in Example \ref{Ex:5.3} has lightlike lines. Locally this surface can be written as $t=g(x,y)=\arcsin{(\cos{x}\sin{y})}$, and has a lightlike line segment $L\subset \{(y,0,y)\mid x\in \mathbb{R}\}$. Along $L$, this surface is of $\alpha^+$ type (see \cite[Example 3]{FujimoriETAL1}). By the even symmetry of $g$ with respect to $x$-axis, its Wick rotation
\[ 
\tilde{y}=h(\tilde{t},\tilde{x})=g(i\tilde{x},\tilde{t})=\arcsin{(\cosh{\tilde{x}}\sin{\tilde{t}})}
\]
is also a maximal graph in $\mathbb{L}^3(\tilde{t},\tilde{x},\tilde{y})$ of $\alpha^+$ type along a lightlike line segment $\tilde{L} \subset \{(\tilde{y},0,\tilde{y})\mid \tilde{y}\in \mathbb{R}\}$ by (i) of Theorem \ref{thm:transformations of L-line} and Corollary \ref{Cor:causality_near_lines}. This surface has the doubly periodic implicit form $\sin{\tilde{y}}=\cosh{\tilde{x}}\sin{\tilde{t}}$ (see Figure \ref{Fig.5.4}, left). 

On the other hand, if we translate the triply periodic spacelike Scherk surface as $\sin{t}=\sin{x}\cos{y}$, which is written as $t=g(x,y)=\arcsin{(\sin{x}\cos{y})}$ locally. By the odd symmetry of $g$ with respect to $x$-axis, its Wick rotation
\[ 
\tilde{y}=h(\tilde{t},\tilde{x})=-ig(i\tilde{t},\tilde{x})=\arcsinh{(\sinh{\tilde{t}}\cos{\tilde{x}})}
\]
is an $\alpha^-_{I}$ type entire timelike minimal graph in $\mathbb{L}^3(\tilde{t},\tilde{x},\tilde{y})$ with a lightlike line segment $\tilde{L} \subset \{(\tilde{y},0,\tilde{y})\mid \tilde{y}\in \mathbb{R}\}$ by (ii) of Theorem \ref{thm:transformations of L-line} and Corollary \ref{Cor:causality_near_lines}. This surface has the singly periodic implicit form $\sinh{\tilde{y}}=\sinh{\tilde{t}}\cos{\tilde{x}}$ (see Figure \ref{Fig.5.4}, center). Moreover, since the above $h$ is even with respect to $\tilde{x}$, we can take the Wick rotation
\[
t=h(y,ix)=\arcsinh{(\sinh{y}\cosh{x})} \text{ in $\mathbb{L}^3(t,x,y)$}.
\]
This surface is called the {\it timelike Scherk surface of 2nd kind} in \cite[Example 5]{FujimoriETAL1}. By (i) of Theorem \ref{thm:transformations of L-line}, it is also an entire timelike minimal graph of $\alpha^-_{I}$ type along the lightlike line $\{(y,0,y)\mid x\in \mathbb{R}\}$.

\noindent
\textbf{Acknowledgement.} 
This study was initiated during the authors stay at University of Granada in April 2017. They would like to thank Professor Rafael L\'{o}pez for his invitation and hospitality. The first author is supported by Grant-in-Aid for JSPS Fellows Number 15J06677.

%%%%%%%%%%%%%%%%%
\begin{figure}[!h]
\begin{center}
\begin{tabular}{c}

%1
\hspace{-0.8cm}
\begin{minipage}{0.4\hsize}
\begin{center}
\vspace{-1.0cm}
\includegraphics[clip,scale=0.35,bb=0 0 350 380]{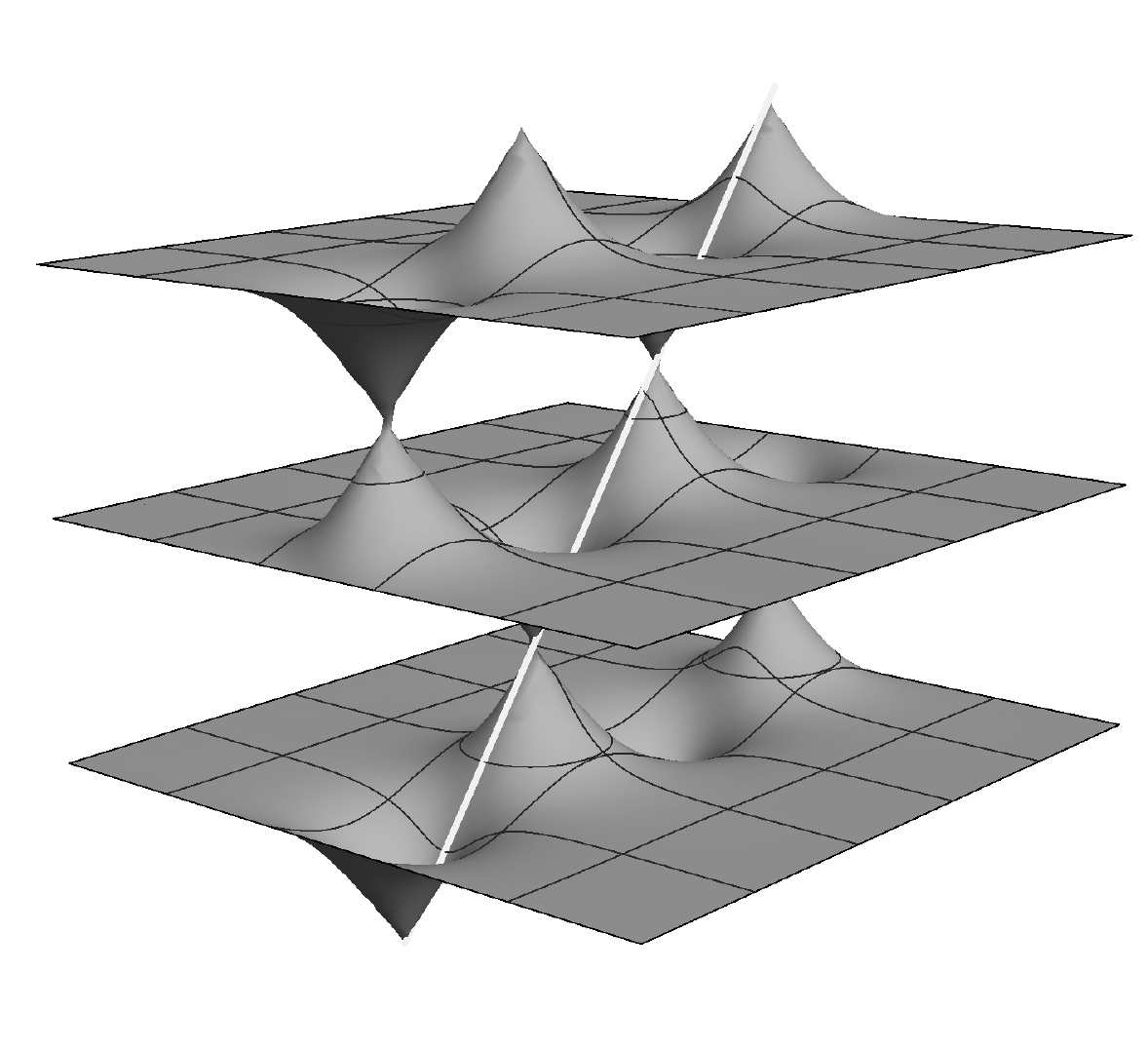}
\vspace{0.3cm}
\end{center}
\end{minipage}
%2
\hspace{-1.0cm}
\begin{minipage}{0.4\hsize}
\begin{center}
\vspace{-0.8cm}
\includegraphics[clip,scale=0.33,bb=0 0 350 380]{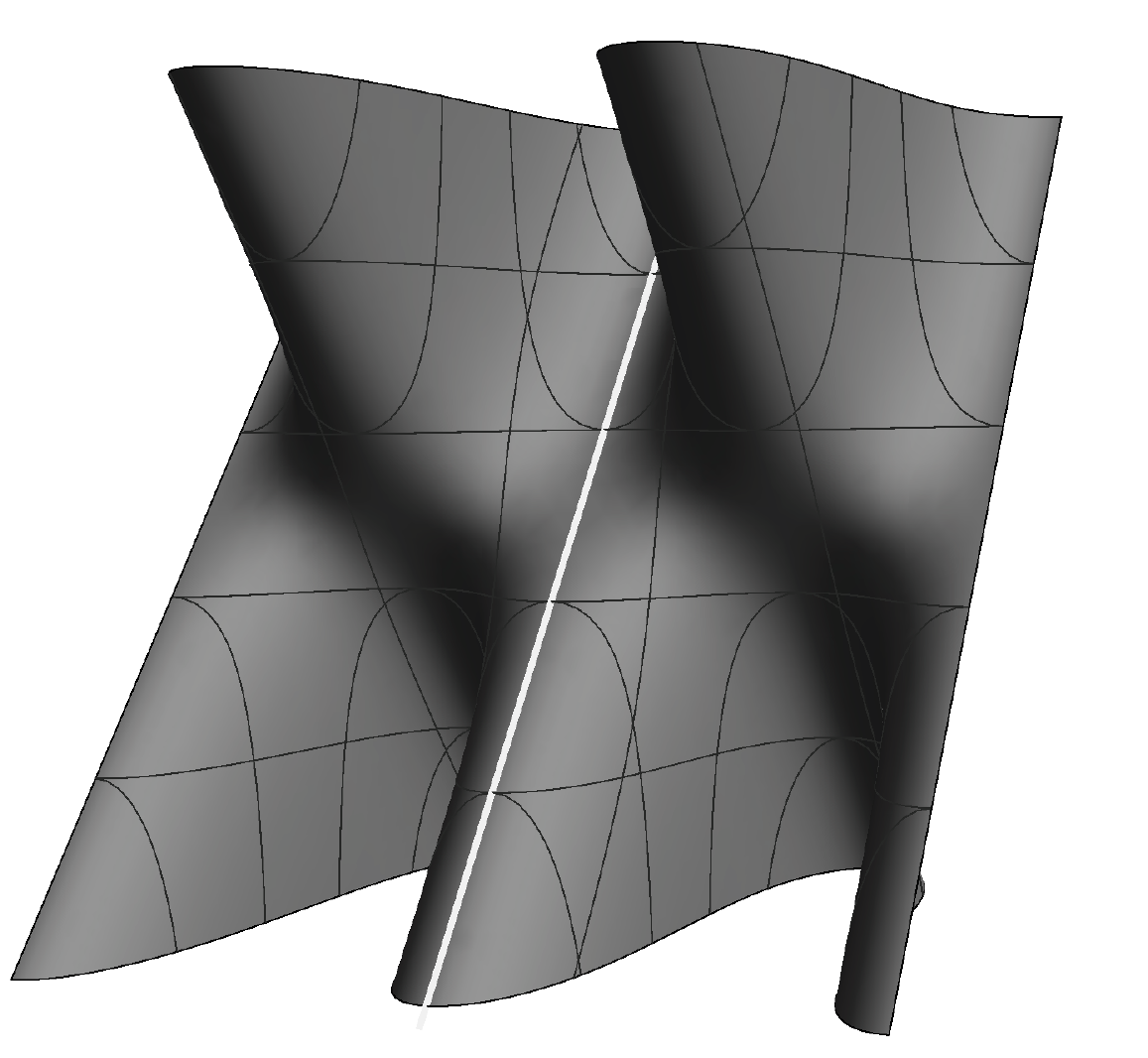}
\vspace{0.5cm}
\end{center}
\end{minipage}

%3
\hspace{-0.8cm}
\begin{minipage}{0.4\hsize}
\begin{center}
\vspace{-1.6cm}
\hspace{-1.0cm}
\includegraphics[clip,scale=0.37,bb=0 0 350 380]{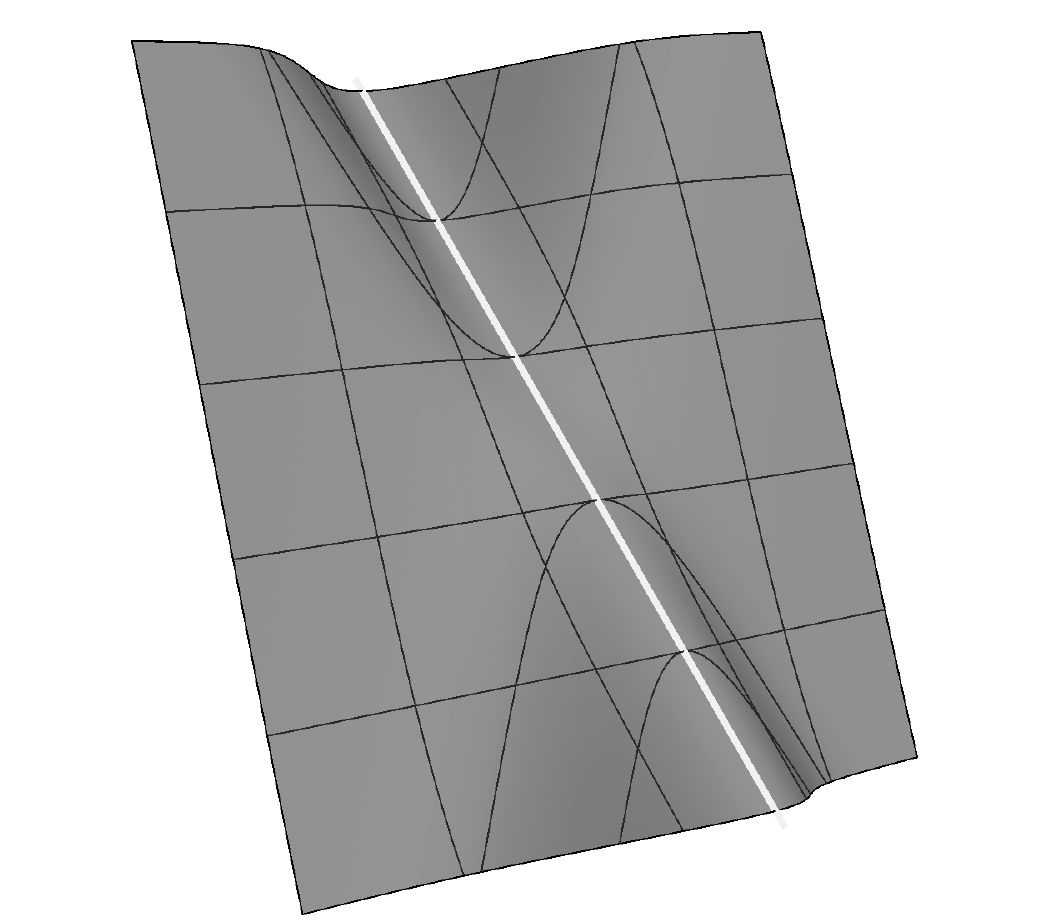}
%\vspace{0.5cm}
\end{center}
\end{minipage}
\end{tabular}
\end{center}
\vspace{-0.3cm}
\caption{From left to right: the spacelike doubly periodic Scherk surface, the timelike singly periodic Scherk surface and the timelike Scherk surface of 2nd kind with a lightlike line.}\label{Fig.5.4}

\end{figure}
%%%%%%%%%%%%%%%%%%%%%%
\end{example}

%%%%%%%%%%%%%%%%%%%%%%%%%%%%%%%%%%

 \end{document}